\newcommand{\bvec}[1]{\mathbf{#1}}
\newcommand{\Tr}{\mathrm{Tr}}
\newcommand{\vD}{\bvec{D}}
\newcommand{\vr}{\bvec{r}}
\newcommand{\ud}{\,\mathrm{d}}
\newcommand{\ext}{\mathrm{ext}}
\newcommand{\Hxc}{\mathrm{Hxc}}
\newcommand{\abs}[1]{\lvert#1\rvert}
\newcommand{\norm}[1]{\lVert#1\rVert}
\newcommand{\average}[1]{\left\langle#1\right\rangle}
\newcommand{\wt}[1]{\widetilde{#1}}
\newcommand{\ie}{\textit{i.e.}{~}}
\newcommand{\eg}{\textit{e.g.}{~}}
\newcommand{\Or}{\mathcal{O}}
\newcommand{\I}{\mathrm{i}}
\newcommand{\CC}{\mathbb{C}}
\newcommand{\REV}[1]{{#1}}
\numberwithin{equation}{section}
\numberwithin{figure}{section}
\newtheorem{thm}{\protect\theoremname}
\newtheorem{lem}[thm]{\protect\lemmaname}
\newtheorem{rem}[thm]{\protect\remarkname}
\newtheorem{prop}[thm]{\protect\propositionname}
\newtheorem{cor}[thm]{\protect\corollaryname}
\providecommand{\corollaryname}{Corollary}
\providecommand{\lemmaname}{Lemma}
\providecommand{\propositionname}{Proposition}
\providecommand{\remarkname}{Remark}
\providecommand{\theoremname}{Theorem}
\title{Quantum Dynamics with the Parallel Transport Gauge}
\author{
Dong An\thanks{Department of Mathematics, University of California, Berkeley, Berkeley, CA 94720. Email: \texttt{dong\_an@berkeley.edu}}
\and Lin Lin\thanks{Department of Mathematics, University of California, Berkeley, Berkeley, CA 94720 and Computational Research Division, Lawrence Berkeley National Laboratory, Berkeley, CA 94720. Email: \texttt{linlin@math.berkeley.edu}}
}
\begin{document}

\maketitle

\begin{abstract}
  The dynamics of a closed quantum system is often studied with the
  direct evolution of the Schr\"odinger equation.  In this paper, we
  propose that the gauge choice (\ie degrees of freedom irrelevant to
  physical observables) of the Schr\"odinger equation can be generally
  non-optimal for numerical simulation. This can limit, and in some
  cases severely limit the time step size.  We find that
  the optimal gauge choice is given by a parallel transport formulation.
    This parallel transport dynamics can be simply interpreted as the
    dynamics driven by the residual vectors, analogous to those defined in
    eigenvalue problems in the time-independent setup.  The parallel
  transport dynamics can be derived from a Hamiltonian structure, 
  thus suitable to be solved using a symplectic and implicit time
  discretization scheme, such as the implicit midpoint rule, which
  allows the usage of a large time step and ensures the long time
  numerical stability. We analyze the parallel transport dynamics in the
  context of the singularly perturbed linear Schr\"odinger equation, and
  demonstrate its superior performance in the near adiabatic regime.  We
  demonstrate the effectiveness of our method using numerical results
  for linear and nonlinear Schr\"odinger equations, 
  as well as the time-dependent
  density functional theory (TDDFT) calculations for electrons in a
  benzene molecule driven by an ultrashort laser pulse. 
\end{abstract}

\begin{keywords}
  Schr\"odinger equation; Quantum dynamics; Gauge; Parallel transport; 
  Density matrix; von
  Neumann equation; Symplectic method; Singularly perturbed system;
  Time-dependent density functional theory; Adiabatic theorem
\end{keywords}

\section{Introduction}

Consider the following set of coupled nonlinear Schr\"odinger equations
\begin{equation}\label{eqn:SEgeneral}
  \I \epsilon \partial_t \Psi(t) = H(t,P) \Psi(t).\\
\end{equation}
\REV{Here we assume $0<\epsilon \ll 1$.} $\Psi(t)=[\psi_{1}(t),\ldots,\psi_{N}(t)]$ are $N$ time-dependent
 wave functions subject to suitable initial and boundary conditions. 
$H(t,P)$ is a self-adjoint time-dependent Hamiltonian. 
\REV{$P(t)$ is called the density matrix and defined as 
\begin{equation}
  P(t) = \Psi(t)\Psi^{*}(t) = \sum_{j=1}^{N} \psi_{j}(t)\psi_{j}^{*}(t).
  \label{eqn:dm}
\end{equation}}
\REV{Note that when the initial state $\Psi(0)$ consists of $N$ orthonormal functions, the functions in $\Psi(t)$ will remain orthonormal for all $t$, i.e. $(\psi_i(t),\psi_j(t))=\delta_{ij}$, where $(\cdot,\cdot)$ denotes a suitable inner product. Then}
$$
P^2(t)=\sum_{j,k=1}^N \psi_j(t)(\psi_j(t),\psi_k(t))\psi_k^*(t)=\sum_{j=1}^{N} \psi_{j}(t)\psi_{j}^{*}(t)=P(t),
$$
\REV{\ie $P(t)$ is a projector.
}
The explicit dependence of the Hamiltonian
on $t$ is often due to the existence of an external field, and we assume
the partial derivatives $\frac{\partial^{m} H}{\partial t^{m}}$ are
of $\Or(1)$ in some suitable norms for all $m\geq 1$. 
Hence when $0 < \epsilon \ll 1$, the wave functions can oscillate on
a much smaller time scale than that of the external fields, and this is
called the singularly perturbed regime~\cite{HairerWanner1991}.

The equations~\eqref{eqn:SEgeneral} are rather general and appear in
several fields of scientific computation.  In the
simplest setup when $N=1$ and $H(t,P)\equiv H(t)$, this is the linear
Schr\"odinger equation.  Another example is the nonlinear
Schr\"odinger equation (NLSE) used for modeling nonlinear
photonics and Bose-Einstein condensation process
~\cite{FetterWalecka2003},
\begin{equation}\label{eqn:NLSexample}
    \I\epsilon \partial_t \psi(t) = H_0(t)\psi(t) + g|\psi(t)|^2\psi(t){,}
\end{equation}
where $H_0(t)$ is a Hermitian matrix obtained by discretizing the linear 
operator $-\frac{1}{2}\Delta + V(x,t)$. Since $N=1$,
$P(t)=\psi(t)\psi^{*}(t)$, and $|\psi(t)|^2=\mathrm{diag}[P(t)]$ is
a nonlinear local potential.
When $N>1$, the coupled set of Schr\"odinger equations must be solved
simultaneously. This is the case in the time-dependent density
functional theory (TDDFT)~\cite{RungeGross1984,OnidaReiningRubio2002}.  

The simulation of Eq.~\eqref{eqn:SEgeneral} and in
particular~\eqref{eqn:NLSexample} has been studied via a wide
range of numerical discretization
methods, such as explicit Runge-Kutta
methods~\cite{SchleifeDraegerKanaiEtAl2012}, implicit Runge-Kutta
methods~\cite{CastroMarquesRubio2004}, operator splitting 
methods~\cite{BaoJinMarkowich2002,Lubich2008}, Magnus expansion
methods~\cite{CastroMarquesRubio2004,ChenPolizzi2010}, exponential time
differencing methods~\cite{KassamTrefethen2005},
spectral deferred correction methods~\cite{JiaHuang2008}, dynamical low
rank approximation~\cite{KochLubich2007}, adiabatic
state expansion~\cite{JahnkeLubich2003,WangLiWang2015}, to name a few.
What this paper focuses on is not to develop another numerical 
scheme to directly discretize~\eqref{eqn:SEgeneral}, but to propose an
alternative formulation that is equivalent to~\eqref{eqn:SEgeneral}, and
can be solved with improved numerical efficiency using existing
discretization schemes.

More specifically, note that if we multiply $\Psi(t)$ by a
time-dependent unitary matrix $U(t)\in \CC^{N\times N}$, the resulting set of 
rotated wave functions, denoted by $\Phi(t)=\Psi(t)U(t)$, yields the same
density matrix as
\begin{equation}
  P(t)=\Phi(t)\Phi^{*}(t) = \Psi(t)\left[U(t)U^{*}(t)\right]\Psi^{*}(t)
  = \Psi(t)\Psi^{*}(t).
  \label{eqn:dmgauge}
\end{equation}
Since the unitary rotation matrix $U(t)$ is irrelevant to the density
matrix which is used to represent many physical observables, 
$U(t)$ is called the gauge, and Eq.~\eqref{eqn:dmgauge}
indicates the density matrix is \textit{gauge-invariant}.
Furthermore, Eq.~\eqref{eqn:SEgeneral} can be
directly written in terms of the density matrix as 
\begin{equation}
  \I \epsilon \partial_t P(t) = [H(t,P),P(t)],  
  \label{eqn:vonNeumann}
\end{equation}
where $[H,P]:=HP-PH$ is the commutator between $H$ and $P$.
Eq.~\eqref{eqn:vonNeumann} is called the von Neumann equation (or
quantum Liouville equation), which can be viewed as a more intrinsic
representation of quantum dynamics since the gauge degrees of freedom
are eliminated completely. 

The simulation of the von Neumann equation can also be advantageous from
the perspective of time discretization. Consider the simplified scenario
that $H(t,P)\equiv H(P)$ does not explicitly depend on $t$, and the
initial state $\Psi(0)$ consists of a set of eigenfunctions of $H$, \ie
\begin{equation}
  H[P] \psi_{j}(0) = \psi_{j}(0) \lambda_{j}(0), \quad j=1,\ldots,N, \quad
  P=\sum_{j=1}^{N}\psi_{j}(0)\psi_{j}^{*}(0).
  \label{eqn:nleig}
\end{equation}
Eq.~\eqref{eqn:nleig} is a set of nonlinear eigenvalue equations. When
solved self-consistently, the solution to the Schr\"odinger
equation~\eqref{eqn:SEgeneral} has an analytic form
\begin{equation}
  \psi_{j}(t)=\exp\left( -\frac{\I}{\epsilon} \lambda_{j}(0) t \right)
  \psi_{j}(0), \quad j=1,\ldots,N,
  \label{eqn:psiphasefactor}
\end{equation}
which oscillates on the $\Or(\epsilon)$ time scale. Hence many
numerical schemes still need to resolve the dynamics with a time step of 
$\Or(\epsilon)$. On the other hand, the right hand side of the von
Neumann equation vanishes for all $t$, and hence nominally can be
discretized with an arbitrarily large time step! Of course 
one can use techniques such as integration
factors~\cite{CohenJahnkeLorenzEtAl2006} to make this simulation using the Schr\"odinger
equation as efficient. However this example illustrates that the gap 
in terms of the size of the time step generally exists between the
Schr\"odinger representation and the von Neumann representation. 

In this paper, we identify that such gap is solely due to the gauge degrees
of freedom in the Schr\"odinger representation. By optimizing the gauge
choice, one can propagate the wave functions using a time step comparable
to that of the von Neumann equation. We demonstrate that the optimized
gauge is given by a parallel transport (PT) formulation. We refer to this gauge
as the parallel transport gauge, and the resulting dynamics as the
parallel transport dynamics. Correspondingly the trivial gauge 
$U(t)\equiv I_{N}$ in Eq.~\eqref{eqn:SEgeneral} is referred to as the Schr\"odinger
gauge, and the resulting dynamics as the Schr\"odinger dynamics.
We remark that the PT dynamics can also be interpreted as an analytic
and optimal way of performing the dynamical low rank
approximation~\cite{KochLubich2007} for Eq.~\eqref{eqn:SEgeneral}.  Note
that the simulation of the von Neumann equation requires the explicit
operation on the density matrix $P(t)$.  When a large basis set such as
finite elements or planewaves is used to discretize the partial
differential equation, the storage cost of $P(t)$ can be often
prohibitively expensive compared to that of the wave functions $\Psi(t)$.
Hence the PT dynamics combines the advantages of both approaches, namely to
perform simulation using the time step size of the von Neumann equation,
but with cost comparable to that of the Schr\"odinger equation. 

We analyze the effectiveness of the PT dynamics for the linear
time-dependent Schr\"odinger equation in the near adiabatic regime.  We
remark that efficient numerical methods have been recently developed in
this regime based on the construction of a set of instantaneous
adiabatic states~\cite{JahnkeLubich2003,WangLiWang2015}. The assumption
is that the wave functions can be approximated by the subspace spanned by
low energy eigenstates of the Hamiltonian at each $t$. The dimension of
the subspace is often chosen to be $cN$, where $c$ is a relatively small
constant.  Compared to these methods, the PT dynamics always operates
only on $N$ wave functions, and therefore has reduced computational and
the storage cost.  The PT dynamics is also applicable beyond the near
adiabatic regime.

By extending the quantum adiabatic
theorem~\cite{Nenciu1993,AvronElgart1999} to the PT dynamics, we prove that the local truncation
error of the PT dynamics gains an extra order of accuracy in terms of
$\epsilon$, when the time step is $\Or(\epsilon)$ or
smaller. 
The PT dynamics, after a slight modification, can be derived from a Hamiltonian system similar to that
in the Schr\"odinger dynamics. Hence the gain of accuracy for the local
truncation error can be directly translated to the global error as well
for long time simulation.

We demonstrate the effectiveness of the PT dynamics 
using numerical results of the  model linear
and nonlinear Schr\"odinger equations. We also perform time-dependent
density functional theory (TDDFT) calculations for the electrons in a
benzene molecule driven by an ultrashort laser pulse, near and beyond the
adiabatic regime.  
When the spectral radius of the Hamiltonian is large, it is suitable to
discretize the PT dynamics using a symplectic and implicit time
discretization scheme, such as the implicit midpoint rule, and the
resulting scheme can significantly outperform the same scheme for the
Schr\"odinger dynamics. We also find that other time-reversible and
implicit time discretization schemes, such as the Crank-Nicolson scheme,
can yield similar performance as well. 
Numerical results confirm our analysis in the near
adiabatic regime, and indicate that the convergence of the PT dynamics
can start when the time step size is much larger than $\Or(\epsilon)$. This
is in contrast to the Schr\"odinger dynamics where the error stays flat
until the time step reaches below $\Or(\epsilon)$. For TDDFT
calculations, we find that our discretized PT dynamics can achieve 
$31.7$ times speedup in the near adiabatic regime, and $5.3$ times
speedup beyond the adiabatic regime.

This paper is organized as follows. We derive the parallel transport
gauge in Section~\ref{sec:ptgauge}, and discuss the numerical
discretization of the parallel transport dynamics in
Section~\ref{sec:numerical}.  We analyze the parallel
transport dynamics in the singularly perturbed regime in
Section~\ref{sec:analysis}. We then present the numerical results in
Section~\ref{sec:numer}, followed by the conclusion in
Section~\ref{sec:conclusion}.
%

\section{Parallel Transport Gauge}\label{sec:ptgauge}

\REV{Since the concept of the parallel transport gauge is associated with the time propagation instead of spatial discretization, for simplicity of the presentation, unless otherwise specified, we assume that Eq. \eqref{eqn:SEgeneral} represents a discrete, finite dimensional quantum system, i.e. for a given time $t$, $\psi_{j}(t)$ is a finite dimensional vector, and $H(t,P)$ is a finite dimensional matrix. If the quantum system is spatially continuous, we may first find a set of orthonormal bases functions $\{e_j(\vr)\}_{j=1}^{d}$ satisfying $\int e_j^*(\vr)e_{j'}(\vr) \ud \vr = \delta_{jj'}$, and expand the continuous wavefunction as $\wt{\psi}_j(\vr,t) \approx \sum_{j=1}^d \psi_j(t) e_j(\vr)$. Then after a Galerkin projection, Eq. \eqref{eqn:SEgeneral} becomes a $d$-dimensional quantum system, and the inner product for the coefficients $\psi_j(t)$ becomes the standard $\ell^2$-inner product as $(\psi_j(t),\psi_k(t)):=\psi_j^*(t)\psi_k(t)=\delta_{jk}$.  Hence we can use the linear
algebra notation. The star notation is
interpreted as the complex conjugation when applied to a scalar, and
Hermitian conjugation when applied to a vector or a matrix. }

\subsection{Derivation}\label{sec:deriv}

For simplicity let us consider the case $N=1$ first, where the gauge
matrix $U(t)$ simply becomes a phase factor $c(t)\in \CC$,\REV{$|c(t)|=1$}.  Note
that the gauge choice cannot affect physical observables such as the
density matrix.  Hence conceptually we may think that the time-dependent
density matrix $P(t)$ has already been obtained as the solution of the
von Neumann equation~\eqref{eqn:vonNeumann} on some time interval
$[0,T]$. Similarly the wave function $\psi(t)$ satisfying the
Schr\"odinger dynamics is also known.
Then the relation
\begin{equation}
  P(t)\varphi(t) = \varphi(t), \quad \varphi(t) = \psi(t) c(t)
  \label{eqn:projectioncond}
\end{equation}
is satisfied for any gauge choice. 
For simplicity we use the notation 
$\dot{\varphi}(t)=\partial_{t}\varphi(t)$, and drop the explicit
$t$-dependence in all quantities, as well as the $P$-dependence in
the Hamiltonian unless otherwise noted. Our goal is to find
the time-dependent gauge factor $c(t)$ so that the rotated wave function 
$\varphi(t)$ varies \textit{as slowly as possible}.  This gives rise to
the following minimization problem,

\begin{equation}\label{eqn:minproblem}
\begin{split}
  \min_{c(t)} \quad &\norm{\dot{\varphi}(t)}^2_{2} \\
  \text{s.t.} \quad &\varphi(t) = \psi(t)c(t), \quad \abs{c(t)} = 1.
\end{split}
\end{equation}

In order to solve~\eqref{eqn:minproblem}, note that $P(t)$ is a
projector, we split $\dot{\varphi}$ into
two orthogonal components, 
\begin{equation}
  \dot{\varphi} = P\dot{\varphi} + (I-P)\dot{\varphi}.
  \label{}
\end{equation}
By taking the time derivative with respect to both sides of the first equation in 
Eq.~\eqref{eqn:projectioncond}, we have
\begin{equation}
  (I-P)\dot{\varphi} = \dot{P}\varphi.
  \label{eqn:projectionderiv}
\end{equation}
Then
\begin{equation}
  \begin{split}
      \|\dot{\varphi}\|_2^2 &= \|P\dot{\varphi}\|_2^2 +
      \|(I-P)\dot{\varphi}\|_2^2 \\
      &= \|P\dot{\varphi}\|_2^2 + \|\dot{P}\varphi\|_2^2 \\ 
      &= \|P\dot{\varphi}\|_2^2 + \|\dot{P}\psi\|_2^2.
  \end{split}
\end{equation}
In the last equality, we have used that $\abs{c(t)}=1$. Note that the
term $\|\dot{P}\psi\|_2^2$ is independent of the gauge choice, so
$\|\dot{\varphi}\|_2^2$ is minimized when 
\begin{equation}\label{eqn:PTcondition}
    P\dot{\varphi} = 0.
\end{equation}
Therefore instead of writing down the minimizer of
Eq.~\eqref{eqn:minproblem} directly, we define the gauge implicitly
through Eq.~\eqref{eqn:PTcondition}.



Let us write down an equation for $\varphi(t)$ directly. Combining
equations~\eqref{eqn:projectionderiv},~\eqref{eqn:PTcondition},
~\eqref{eqn:vonNeumann} and~\eqref{eqn:projectioncond}, we have
\begin{equation}
  \dot{\varphi} = \dot{P}\varphi 
    = \frac{1}{\I\epsilon}[H,P]\varphi = \frac{1}{\I\epsilon}(H\varphi -
    \varphi(\varphi^*H\varphi)),
\end{equation}
or equivalently
\begin{equation}\label{eqn:PTLSEsingle}
    \I \epsilon \partial_t \varphi =  H\varphi -
    \varphi(\varphi^*H\varphi).
\end{equation}

For reasons that will become clear shortly, we refer to this gauge
choice as the \emph{parallel transport gauge}, and
Eq.~\eqref{eqn:PTLSEsingle} as the parallel transport (PT) dynamics.
Comparing with the Schr\"odinger dynamics, we find that the PT dynamics
only introduces one extra term $\varphi(\varphi^*H\varphi)$. The right
hand side of Eq.~\eqref{eqn:PTLSEsingle} takes the form of the residual
vector in the solution of eigenvalue problem of the
form~\eqref{eqn:nleig}. Hence the PT dynamics can be simply interpreted
as the dynamics driven by the residuals. Therefore we expect that the PT
dynamics can be
particularly advantageous in the \textit{near adiabatic}
regime~\cite{JahnkeLubich2003,WangLiWang2015}, \ie when
$\varphi$ is close to be the eigenstate of $H$, and all the residual vectors
are therefore small.

Now we provide an alternative interpretation of the gauge choice using
the parallel transport formulation associated with a family of
projectors.  For simplicity let us assume
$H(t)$ is already discretized into a finite dimensional Hermitian matrix for each
$t$ and so is $P(t)$.  Given the single parameter family of projectors
$\{P(t)\}$ defined on some interval $[0,T]$, we define
\begin{equation}
    \mathcal{A}(t) = \I \epsilon [\partial_{t} P(t),P(t)].
\end{equation}
It can be directly verified that $\mathcal{A}(t)$ is a Hermitian matrix
for each $t$, and induces a dynamics
\begin{equation}\label{eqn:ptoperator}
  \I \epsilon \partial_t \mathcal{T}(t) = \mathcal{A}(t) \mathcal{T}(t), 
  \quad \mathcal{T}(0)=I.
\end{equation}
$\mathcal{T}(t)$ is a unitary matrix for each
$t$. $\mathcal{T}(t)$ is called the parallel transport evolution
operator (see \eg~\cite{Nakahara2003,CorneanMonacoTeufel2017}).  The connection between the parallel transport 
dynamics and the parallel transport evolution operator is given in
Proposition~\ref{prop:ptrelation}.

\begin{prop}\label{prop:ptrelation}
  Define $\varphi(t)=\mathcal{T}(t)\psi(0)$ where $\mathcal{T}(t)$ is the
  evolution operator satisfying~\eqref{eqn:ptoperator}, and $P(t)$
  satisfies the von Neumann equation~\eqref{eqn:vonNeumann}. Then
  $P(t) = \varphi(t)\varphi^*(t)$, and 
  $\varphi(t)$ satisfies the parallel transport
  dynamics~\eqref{eqn:PTLSEsingle}.
\end{prop}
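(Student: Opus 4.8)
The plan is to prove the two assertions in order: first the intertwining identity $P(t)\mathcal{T}(t) = \mathcal{T}(t)P(0)$, which immediately yields $P(t) = \varphi(t)\varphi^*(t)$, and then the evolution equation for $\varphi$. I would begin by recording two elementary consequences of the fact that $P(t)$ is an orthogonal projector for every $t$: differentiating $P^2 = P$ gives $\dot P P + P\dot P = \dot P$, and left-multiplying this identity by $P$ and using $P^2 = P$ gives $P\dot P P = 0$. I would also note that since $\mathcal{A}(t)$ is Hermitian, $\mathcal{T}(t)$ solving~\eqref{eqn:ptoperator} is unitary, so $\mathcal{T}^*(t) = \mathcal{T}(t)^{-1}$ and in particular $\varphi(t) = \mathcal{T}(t)\psi(0)$ has unit norm whenever $\psi(0)$ does.

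For the intertwining identity I would consider $X(t) := \mathcal{T}^*(t)P(t)\mathcal{T}(t)$ and show $\dot X \equiv 0$. From $\I\epsilon\dot{\mathcal{T}} = \mathcal{A}\mathcal{T}$ with $\mathcal{A} = \I\epsilon[\dot P, P]$ we have $\dot{\mathcal{T}} = [\dot P, P]\mathcal{T}$, and since $[\dot P, P]$ is anti-Hermitian, $\dot{\mathcal{T}}^* = -\mathcal{T}^*[\dot P, P]$. A short computation then gives $\dot X = \mathcal{T}^*\bigl([P,[\dot P, P]] + \dot P\bigr)\mathcal{T}$, and the bracketed term vanishes: expanding, $[P,[\dot P,P]] = 2P\dot P P - \dot P P - P\dot P$, so $[P,[\dot P,P]] + \dot P = 2P\dot P P - (\dot P P + P\dot P) + \dot P = 0$ by the two projector identities above. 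Since $X(0) = P(0)$, this gives $\mathcal{T}^*(t)P(t)\mathcal{T}(t) = P(0)$, and hence (using unitarity) $P(t) = \mathcal{T}(t)P(0)\mathcal{T}^*(t)$. Because the initial state is normalized, $P(0) = \psi(0)\psi^*(0)$, and therefore $P(t) = \mathcal{T}(t)\psi(0)\psi^*(0)\mathcal{T}^*(t) = \varphi(t)\varphi^*(t)$, which is the first claim.

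For the evolution equation, differentiating $\varphi = \mathcal{T}\psi(0)$ gives $\I\epsilon\dot\varphi = \mathcal{A}\varphi = \I\epsilon[\dot P, P]\varphi$. I would then verify the parallel transport condition $P\dot\varphi = 0$, which recovers~\eqref{eqn:PTcondition}: from $P\varphi = \varphi$ and $\dot P P + P\dot P = \dot P$ we get $P\dot P\varphi = \dot P\varphi - \dot P P\varphi = 0$, so $P\dot\varphi = P[\dot P,P]\varphi = (P\dot P P - P\dot P)\varphi = 0$. Consequently $\dot\varphi = (I-P)\dot\varphi = \dot P\varphi$, the last equality being~\eqref{eqn:projectionderiv} (from differentiating $P\varphi = \varphi$). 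Substituting the von Neumann equation $\I\epsilon\dot P = [H,P]$ and using $P\varphi = \varphi$ and $PH\varphi = \varphi(\varphi^*H\varphi)$ gives $\I\epsilon\dot\varphi = [H,P]\varphi = H\varphi - \varphi(\varphi^*H\varphi)$, which is exactly~\eqref{eqn:PTLSEsingle}.

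The only genuinely nontrivial point is the algebraic identity $[P,[\dot P,P]] + \dot P = 0$; the rest is bookkeeping. I expect this to be the main obstacle in the sense that one must resist expanding everything at once and instead reduce it to the two projector identities $\dot P P + P\dot P = \dot P$ and $P\dot P P = 0$, which is also the mechanism underlying the classical theory of Kato's parallel transport. I would also state explicitly at the start that $\mathcal{T}(t)$ is unitary, since this is used both to pass from $X(t) = P(0)$ to $P(t) = \mathcal{T}(t)P(0)\mathcal{T}^*(t)$ and to guarantee $\norm{\varphi(t)}_2 = 1$, which is needed to write $P\varphi = \varphi$ in the last step.
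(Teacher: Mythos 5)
Your proof is correct, and it follows the same overall strategy as the paper: establish the intertwining of $\mathcal{T}$ with the projectors, deduce $P(t)=\varphi(t)\varphi^*(t)$, then reduce the equation for $\varphi$ to $\dot\varphi=\dot P\varphi$ and substitute the von Neumann equation. The one place you genuinely deviate is the intertwining step. The paper proves $P(t)\mathcal{T}(t)=\mathcal{T}(t)P(0)$ by showing that $P\mathcal{T}$ and $\mathcal{T}P(0)$ solve the same initial value problem $\I\epsilon\partial_t Y=\mathcal{A}Y$ (an ODE-uniqueness argument that never needs unitarity of $\mathcal{T}$), whereas you prove the conjugated statement $\mathcal{T}^*P\mathcal{T}=P(0)$ as a conservation law, reducing it to the algebraic identity $[P,[\dot P,P]]+\dot P=0$; both arguments ultimately rest on the same two projector identities $\dot P=\dot PP+P\dot P$ and $P\dot PP=0$, which you verify correctly. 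Your formulation buys a small bonus: from $P(t)=\mathcal{T}(t)P(0)\mathcal{T}^*(t)$ and $P(0)=\psi(0)\psi^*(0)$ you obtain $P(t)=\varphi(t)\varphi^*(t)$ immediately by conjugation, whereas the paper passes through $P\varphi=\varphi$ together with $\norm{\varphi}_2=1$ and implicitly uses that $P(t)$ stays rank one; the paper's version, in turn, is the one that generalizes verbatim to $N>1$ (and its intermediate identity $\I\epsilon\partial_t(P\mathcal{T})=\mathcal{A}(P\mathcal{T})$ is reused later in the proof of the adiabatic theorem), so both routes have their uses. Your derivation of the dynamics itself, via $P\dot\varphi=0$, $(I-P)\dot\varphi=\dot P\varphi$, and $PH\varphi=\varphi(\varphi^*H\varphi)$, matches the paper's.
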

\begin{proof}
  First we prove the following relation 
  \begin{equation}
    P(t) \mathcal{T}(t) = \mathcal{T}(t) P(0)
    \label{eqn:ptcommute}
  \end{equation}
  by showing that both sides solve the same initial value problem. 
  Note that $\mathcal{T}(t) P(0)$ satisfies a differential equation 
  of the form~\eqref{eqn:ptoperator} 
  with the initial value $\mathcal{T}(0) P(0)$. 
  We would like to derive the differential equation 
  $P(t) \mathcal{T}(t)$ satisfies. 
  Taking the time derivative on both sides of the identity $P(t) = P^2(t)$, we
  yield two useful relations
  \begin{equation}
    \dot{P} = \dot{P} P + P \dot{P}, \quad  P\dot{P}P = 0.
    \label{eqn:Pidentity}
  \end{equation}
  Then using Eq.~\eqref{eqn:ptoperator},
  \[
  \I \epsilon \partial_t(P\mathcal{T}) = \I \epsilon
  \dot{P}\mathcal{T} + \I \epsilon P[\dot{P},P]\mathcal{T} = \I
  \epsilon \dot{P} P\mathcal{T}.
  \]
  On the other hand,
  \[
  \mathcal{A}(P\mathcal{T}) = \I \epsilon (\dot{P} PP\mathcal{T} -
  P\dot{P}P\mathcal{T}) = \I \epsilon \dot{P} P\mathcal{T}.
  \]
  Therefore
  \begin{equation}
    \I \epsilon \partial_t (P\mathcal{T}) = \mathcal{A} (P\mathcal{T}).
    \label{eqn:PTopediff}
  \end{equation}
  Hence $P\mathcal{T}$ also satisfies an equation of the
  form~\eqref{eqn:ptoperator}.  This proves Eq.~\eqref{eqn:ptcommute} by
  noticing further the shared initial condition 
  $P(0)\mathcal{T}(0)=\mathcal{T}(0)P(0)$.

  Using Eq.~\eqref{eqn:ptcommute}, we have
  \begin{equation}
    P(t)\varphi(t) = P(t)\mathcal{T}(t)\psi(0) = \mathcal{T}(t)P(0)\psi(0) = 
    \mathcal{T}(t)\psi(0) = \varphi(t).
    \label{eqn:Pphi1}
  \end{equation}
  Since $\mathcal{T}(t)$ is unitary, we have
  $\norm{\varphi(t)}_{2}=1$ for all $t$. Hence
  \begin{equation}
    P(t) = \varphi(t)\varphi^{*}(t).
    \label{eqn:Pphi2}
  \end{equation}

  The only thing left is to show that the gauge choice in $\varphi(t)$
  is indeed the parallel transport gauge. Using
  Eq.~\eqref{eqn:ptcommute} and~\eqref{eqn:PTopediff}, we have
  \begin{equation}
    \I \epsilon \partial_{t}\varphi = \I \epsilon \partial_{t}
    (\mathcal{T} \psi(0)) = \I \epsilon \partial_{t}(P\mathcal{T}) \psi(0)
    = \I \epsilon \dot{P} P \mathcal{T} \psi(0) = H P \varphi - P H P
    \varphi.
    \label{}
  \end{equation}
  Here we have used the von Neumann equation
  \[
  \I \epsilon \dot{P} = HP-PH.
  \]
  Finally using
  Eq.~\eqref{eqn:Pphi1} and~\eqref{eqn:Pphi2}, we have
  \[
  \I \epsilon \partial_{t}\varphi = H\varphi -
  \varphi(\varphi^{*}H\varphi),
  \]
  which is precisely the parallel transport dynamics.
\end{proof}

In order to see why the parallel transport gauge can be more
advantageous, consider again the time-independent 
example~\eqref{eqn:nleig} in the
introduction for the case $N=1$. We find that the right hand side of
Eq.~\eqref{eqn:PTLSEsingle} vanishes, and the solution is simply
\[
\varphi(t)=\varphi(0)=\psi(0)
\]
for all $t$.  This implies that the parallel transport gauge is
$c(t)=\exp\left( +\frac{\I}{\epsilon} \lambda(0) t \right)$ that
perfectly cancels with the rotating factor in~\eqref{eqn:psiphasefactor}.  Hence the
PT dynamics yields the slowest possible dynamics by completely
eliminating the time-dependent phase factor, and the time step for
propagating the PT dynamics can be chosen to be arbitrarily large as in
the case of the von Neumann equation. 

For a more complex example, consider a time-dependent nonlinear
Schr\"odinger equation in one dimension to be further illustrated in
Section~\ref{sec:numer}.  Fig.~\ref{fig:example_NLS_wave} (a) shows the
evolution of the real part of the solution $\psi(t)$ from the
Schr\"odinger dynamics, and that of $\varphi(t)$ from the PT dynamics,
respectively.  We find that the trajectory of $\varphi(t)$ varies
considerably slower than that of $\psi(t)$, which allows us to use a
much larger time step for the simulation.  
Fig.~\ref{fig:example_NLS_wave} (b) measures the accuracy of the average
of the orbital center $\average{x}(t)$, using
simulation with the implicit midpoint
rule, also known as the Gauss-Legendre method of order 2 (GL2) scheme.
We compare the performance of the GL2 scheme with the Schr\"odinger gauge
(S-GL2) and that with the PT gauge (PT-GL2) with the same step size
$h=0.004$, and the reference solution is obtained using a very small
step size $h=10^{-5}$.
We observe that the solution from PT-GL2 agrees very well with
the reference solution, while the phase error of the solution from S-GL2
becomes noticeable already after $t=0.2$.

\begin{figure} \centering
    \begin{subfigure}[b]{0.48\textwidth}
        \includegraphics[width=\textwidth]{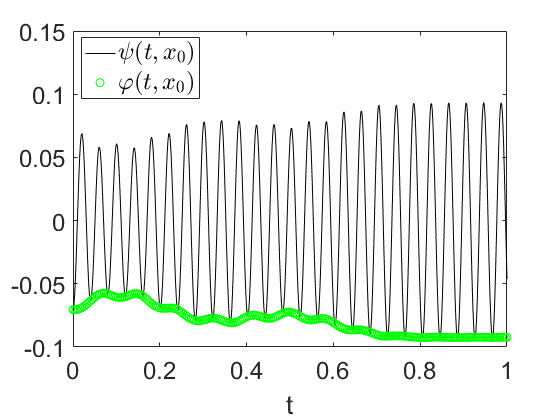}
        \caption{wave functions}
        \label{fig:example_NLS_wave:a}
    \end{subfigure} 
    \begin{subfigure}[b]{0.48\textwidth}
        \includegraphics[width=\textwidth]{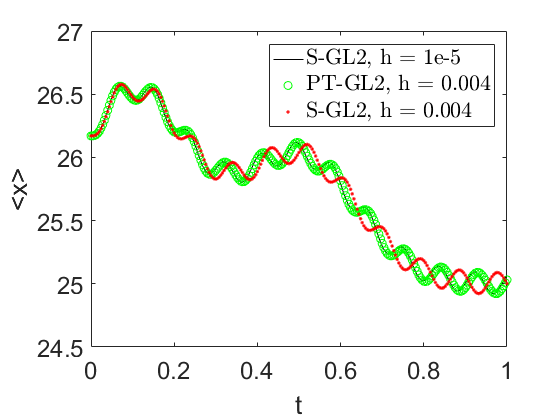}
        \caption{Evolution of centers}
        \label{fig:example_NLS_wave:b}
    \end{subfigure}
    \caption{(a) Real parts of the wave functions at $x_0 = 25$ 
    with the Schr\"odinger gauge and the PT gauge, respectively. 
    (b) Centers of the wave functions.
    Parameters are chosen to be $T = 1, \epsilon = 0.005$, 
    and the reference solution is obtained from S-GL2 with 
    time step size $h = 10^{-5}$. }
    \label{fig:example_NLS_wave}
\end{figure}

\subsection{Hamiltonian structure}\label{sec:hamiltonian}

For simplicity let us consider the linear Schr\"odinger equation, \ie
$H(t,P)\equiv H(t)$, and assume $H(t)$ is a real symmetric matrix for all
$t$. It is well known that the Schr\"odinger dynamics is a Hamiltonian
system~\cite{NettesheimBornemannSchmidtSchutte1996,NettesheimSchutte1999,HairerLubichWanner2006}. More specifically, we separate
the solution $\psi$ into its real and imaginary parts as
\begin{equation}
\psi = q + \I p.
  \label{eqn:psiri}
\end{equation}
\REV{The $\ell^2$-inner product associated with real quantities such as $p,q$ are denoted by $(p,q):=p^Tq$.}
We also introduce the canonically conjugate pair of variables
$(\tau,E)$ to eliminate the explicit dependence of
$H(t)$ on time~\cite{CandyRozmus1991,HairerLubichWanner2006}. This gives the following energy functional
\begin{equation}\label{eqn:LSEenergy}
    \mathcal{E}(\tau,q,E,p) = \frac{1}{2\epsilon}\left[q^TH(\tau)q +
    p^TH(\tau)p\right] + E.
\end{equation}
The Hamiltonian system corresponding to this energy functional is
\begin{equation}\label{eqn:LSEHamiltonian}
  \begin{split}
    \partial_t \tau &= \frac{\partial \mathcal{E}}{\partial E} = 1 {,} \\
    \partial_{t} q &= \frac{\partial
    \mathcal{E}}{\partial p} = \frac{1}{\epsilon}H(\tau)p,\\
    \partial_t E &= -\frac{\partial \mathcal{E}}{\partial \tau} =
    -\frac{1}{2\epsilon}\left[q^T \frac{\partial H(\tau)}{\partial \tau}q +
    p^T \frac{\partial H(\tau)}{\partial \tau} p\right]{,}\\
    \partial_{t} p &= -\frac{\partial
    \mathcal{E}}{\partial q} = -\frac{1}{\epsilon}H(\tau)q {.}
  \end{split}
\end{equation}
Hence $\tau$ is simply the time variable, and $-E$ is the usually
defined energy of the system up to a constant. By
combining the equations for $q,p$ we obtain the Schr\"odinger dynamics
for $\psi$.

Although the PT dynamics only differs from the
Schr\"odinger dynamics by the choice of the gauge, interestingly, the PT dynamics
cannot be directly written as a Hamiltonian system. To illustrate
this, we first separate the real and imaginary parts of $\varphi$ as
in~\eqref{eqn:psiri}, and the PT dynamics can be written as
\begin{equation}\label{eqn:PTReal}
    \begin{split}
  \partial_{t} q =& \frac{1}{\epsilon}(H p - (q^T H q + p^T H p) p),\\
\partial_{t} p =& \frac{1}{\epsilon}(-H q + (q^T H q + p^T H p) q).
\end{split}
\end{equation}
If this dynamics can be derived from some energy functional
$\mathcal{E}$, then
\begin{equation}\label{eqn:PTReal2}
    \begin{split}
   \frac{\partial
  \mathcal{E}}{\partial p} &=\frac{1}{\epsilon}( H p - (q^T H q + p^T H p) p),\\
  \frac{\partial
  \mathcal{E}}{\partial q} &= \frac{1}{\epsilon}(H q - (q^T H q + p^T H p) q).
\end{split}
\end{equation}
Straightforward computation reveals that 
$\frac{\partial^2 \mathcal{E}}{\partial p \partial q} =
\frac{\partial^2 \mathcal{E}}{\partial q \partial p} $
is not true in general, and hence 
the PT dynamics~\eqref{eqn:PTLSEsingle} cannot be a Hamiltonian system.

Fortunately, the PT dynamics can be slightly modified to become a
Hamiltonian system.
Consider the following modified energy functional
\begin{equation}\label{eqn:HamiltonianFunctional}
    \mathcal{E}(\tau,q,E,p) = \frac{1}{2\epsilon} (q^T H(\tau) q + p^T H(\tau) p)(2-q^T q-p^T p) + E.
\end{equation}
The corresponding Hamiltonian equations are
\begin{equation}\label{eqn:PTHamiltonianFor}
    \begin{split}
    \partial_t \tau &= \frac{\partial \mathcal{E}}{\partial E} = 1 {,} \\
  \partial_{t} q &= \frac{\partial
  \mathcal{E}}{\partial p} = \frac{1}{\epsilon}\left[H(\tau) p (2-q^T q-p^T
  p) - (q^T H(\tau) q + p^T H(\tau) p) p \right],\\
  \partial_t E &= -\frac{\partial \mathcal{E}}{\partial \tau} {,}\\
\partial_{t} p &= -\frac{\partial
  \mathcal{E}}{\partial q} = \frac{1}{\epsilon}\left[-H(\tau) q (2-q^T q-p^T
  p) + (q^T H(\tau) q + p^T H(\tau) p) q \right].
\end{split}
\end{equation}
Again $\tau$ is the same as $t$, and the conjugate variable $E(t)$ satisfies 
\[
E(t) = - \frac{1}{2\epsilon} (q^T H(t) q + p^T H(t) p)(2-q^T q-p^T p)
+ \text{constant}.
\]
Compared to the PT dynamics~\eqref{eqn:PTReal}, 
we have an extra factor $(2-q^Tq-p^Tp)$ in the
equations and the energy. Proposition~\ref{prop:PTHamiltonian} states
that the solution to the PT dynamics~\eqref{eqn:PTReal} 
is the same as the solution of 
the Hamiltonian system~\eqref{eqn:PTHamiltonianFor}. 
\begin{prop}\label{prop:PTHamiltonian}
    If $(\tau, q, E, p)$ solves the Hamiltonian system
     \eqref{eqn:PTHamiltonianFor} with normalized initial value condition
    $p^T(0)p(0) + q^T(0)q(0) = 1$,
    then $(q(t),p(t))$ solves \eqref{eqn:PTReal}
    with the same initial value condition,
    and $\varphi(t) = q(t) + \I p(t)$ solves the PT dynamics~\eqref{eqn:PTLSEsingle}.
\end{prop}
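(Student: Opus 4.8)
The plan is to show that the extra factor $(2 - q^Tq - p^Tp)$ appearing in \eqref{eqn:PTHamiltonianFor} is identically equal to $1$ along any trajectory that starts on the unit sphere $q^Tq + p^Tp = 1$. Once this is established, the $\partial_t q$ and $\partial_t p$ equations of the Hamiltonian system \eqref{eqn:PTHamiltonianFor} reduce verbatim to \eqref{eqn:PTReal}, and the final claim about $\varphi$ follows from the elementary real/imaginary decomposition that was already used to pass from \eqref{eqn:PTLSEsingle} to \eqref{eqn:PTReal}.

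The first and central step is to compute $\frac{d}{dt}(q^Tq + p^Tp) = 2q^T\dot q + 2p^T\dot p$ using the two nontrivial equations of \eqref{eqn:PTHamiltonianFor}. Substituting the right-hand sides, the terms proportional to $(2-q^Tq-p^Tp)$ collect into a multiple of $q^T H(\tau) p - p^T H(\tau) q$, while the remaining terms collect into a multiple of $(q^T H(\tau) q + p^T H(\tau) p)(p^Tq - q^Tp)$. Since $H(\tau)$ is real symmetric, $q^T H(\tau) p = (q^T H(\tau) p)^T = p^T H(\tau) q$ and $p^Tq = q^Tp$, so both groups vanish and $q^Tq + p^Tp$ is a conserved quantity of the flow. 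There is no genuine obstacle here beyond bookkeeping, but it is worth stressing that it is precisely this conservation law — equivalently, the invariance of the constraint manifold $\{q^Tq+p^Tp=1\}$ under the flow — that motivates the particular form of the modified energy \eqref{eqn:HamiltonianFunctional}; a naive modification would not share this property.

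Combining the conservation law with the normalized initial condition $p^T(0)p(0) + q^T(0)q(0) = 1$ gives $q(t)^Tq(t) + p(t)^Tp(t) \equiv 1$, hence $2 - q^Tq - p^Tp \equiv 1$ for all $t$ on the interval of existence. Plugging this into the $\partial_t q$ and $\partial_t p$ equations of \eqref{eqn:PTHamiltonianFor} yields exactly \eqref{eqn:PTReal}, with the same (normalized) initial data; the $\tau$- and $E$-components of \eqref{eqn:PTHamiltonianFor} decouple and are irrelevant. Finally, recalling that \eqref{eqn:PTReal} was obtained by separating \eqref{eqn:PTLSEsingle} into real and imaginary parts under the standing assumption that $H$ is real symmetric — so that $\varphi^*H\varphi = q^TH q + p^TH p$ is real — the fact that $(q(t),p(t))$ solves \eqref{eqn:PTReal} is equivalent to $\varphi(t) = q(t) + \I p(t)$ solving \eqref{eqn:PTLSEsingle}, which completes the proof.
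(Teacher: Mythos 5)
Your proposal is correct and follows essentially the same route as the paper: compute $\frac{d}{dt}(q^Tq+p^Tp)$ along the flow of \eqref{eqn:PTHamiltonianFor}, use the symmetry of $H(\tau)$ to see that it vanishes, and conclude from the normalized initial data that $2-q^Tq-p^Tp\equiv 1$, so the Hamiltonian equations reduce to \eqref{eqn:PTReal} and hence to \eqref{eqn:PTLSEsingle}. The only difference is presentational — you spell out the cancellation and the reduction to $\varphi$ slightly more explicitly than the paper does.
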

\begin{proof}
    Comparing Eq. \eqref{eqn:PTHamiltonianFor} with Eq. \eqref{eqn:PTReal},
    we only need to show the identity
    $$p^Tp + q^Tq = 1$$ holds for all $t$.
    By computing
    \begin{align*}
        \frac{d}{dt} (p^Tp+q^Tq) = &2 (p^T \partial_{t} p + q^T\partial_{t}q) \\
        =&\frac{1}{\epsilon} (-2(2-q^Tq-p^Tp)p^THq + 2(q^THq+p^THp)p^Tq \\
        &+ 2(2-q^Tq-p^Tp)q^THp - 2(q^THq+p^THp)q^Tp) = 0 {,}
    \end{align*}
    we find that $p^Tp+q^Tq$ is invariant during the propagation.
    Together with the normalized initial condition, we complete the proof.
\end{proof}

Proposition~\ref{prop:PTHamiltonian} suggests that the Hamiltonian form
of the PT dynamics is
\begin{equation}\label{eqn:PTLSEsingerH}
    \I \epsilon \partial_t \varphi = H\varphi(2-\varphi^*\varphi) -
  \varphi(\varphi^{*}H\varphi) {,}
\end{equation}
which shares exactly the same solution with~\eqref{eqn:PTLSEsingle}
using the condition $\varphi^{*}\varphi=1$. 


At the end of this part, we briefly discuss the Hamiltonian
structure of the nonlinear Schr\"odinger equation and the associated PT dynamics. 
Let us consider the discretized nonlinear Schr\"odinger
equation~\eqref{eqn:NLSexample}, which can be reformulated as a
Hamiltonian system driven by the energy functional
\begin{equation}
    \mathcal{E}(\tau,q,E,p) = \frac{1}{2\epsilon}\left[q^T H_0(\tau)q +
    p^T H_0(\tau)p + \frac{g}{2}\text{Tr}((|q|^2+|p|^2)^2)\right] + E.
\end{equation}

The PT dynamics corresponding to Eq.~\eqref{eqn:NLSexample} 
can be written as
\begin{equation}\label{eqn:PTNLSexample}
    \I\epsilon \partial_t \varphi = H_0\varphi + g|\varphi|^2\varphi 
    - \varphi(\varphi^*H_0\varphi) - g\varphi(\varphi^*|\varphi|^2\varphi) {.}
\end{equation}
Similar to the linear case, 
the PT dynamics itself cannot be reformulated 
as a Hamiltonian system in general, 
but can be slightly modified to become a Hamiltonian system. 
More precisely, define the energy functional 
\begin{equation}
\begin{split}
\mathcal{E}(\tau,q,E,p) &= \frac{1}{2\epsilon}\left[q^T H_0(\tau)q +
    p^T H_0(\tau)p 
     + g\text{Tr}((|q|^2+|p|^2)^2)\right](2-q^Tq-p^Tp) \\
     &\quad - \frac{g}{4\epsilon}\text{Tr}((|q|^2+|p|^2)^2) + E{,}
\end{split}
\end{equation}
then the Hamiltonian system driven by this energy functional can be
written as
\begin{equation}\label{eqn:PTNLSexampleHam}
    \I\epsilon \partial_t \varphi
    = (H_0\varphi + 2g|\varphi|^2\varphi)(2-\varphi^*\varphi) 
    - \varphi(\varphi^*H_0\varphi) - g\varphi(\varphi^*|\varphi|^2\varphi)
    - g|\varphi|^2\varphi {.}
\end{equation}
Again this equation shares the same solution with Eq.~\eqref{eqn:PTNLSexample} 
using the condition $\varphi^*\varphi = 1$. 

\subsection{General case}\label{sec:generalderiv}


The PT dynamics derived in the previous sections can be directly
generalized to Eq.~\eqref{eqn:SEgeneral} with $N>1$. Define the
transformed set of wave functions
$\Phi(t)=\Psi(t)U(t)=[\varphi_{1}(t),\ldots,\varphi_{N}(t)]$, where $U(t)\in
\CC^{N\times N}$ is a gauge matrix. 
Following the same derivation in Section~\ref{sec:deriv}, we find that
the parallel transport gauge is given by the condition
\begin{equation}
  P\dot{\Phi} = 0.
  \label{}
\end{equation}
This gives rise to the following PT dynamics
%
\begin{equation}\label{eqn:PTNLSmulti}
    \I \epsilon \partial_t \Phi(t)
    = H(t,P(t))\Phi(t) - \Phi(t)[\Phi^*(t)H(t,P(t))\Phi(t)], \quad 
    P(t) = \Phi(t) \Phi^*(t) {.}
\end{equation}
Again the PT dynamics is driven by the residual vectors as 
in eigenvalue problems.

In addition, the Hamiltonian structure is also preserved for the
PT dynamics. For simplicity let us consider the linear Hamiltonian
$H(t)$.  We separate the set of PT wave functions $\Phi$ 
into real and imaginary parts as 
$$\Phi(t) = \mathfrak{q}(t) + \I \mathfrak{p}(t) {.}$$
Define the energy functional
\begin{equation}
    \mathcal{E}(\tau,\mathfrak{q},E,\mathfrak{p}) =
    \frac{1}{2\epsilon}\text{Tr}\Big((\mathfrak{q}^TH(\tau)\mathfrak{q} + \mathfrak{p}^TH(\tau)\mathfrak{p})(2I_N - \mathfrak{q}^T\mathfrak{q} - \mathfrak{p}^T\mathfrak{p})\Big) + E {.}
\end{equation}
The associated Hamiltonian system is
\begin{equation}\label{eqn:PTHamiltonianmulti}
    \begin{split}
        \begin{split}
    \partial_t \tau &= \frac{\partial \mathcal{E}}{\partial E} = 1 {,} \\
  \partial_{t} \mathfrak{q} &= \frac{\partial
  \mathcal{E}}{\partial \mathfrak{p}} = \frac{1}{\epsilon}(H(\tau) \mathfrak{p}(2I_N-\mathfrak{q}^T \mathfrak{q}-\mathfrak{p}^T \mathfrak{p}) - \mathfrak{p}(\mathfrak{q}^T H(\tau) \mathfrak{q} + \mathfrak{p}^T H(\tau) \mathfrak{p}) ),\\
  \partial_t E &= -\frac{\partial \mathcal{E}}{\partial \tau} {,}\\
\partial_{t} \mathfrak{p} &= -\frac{\partial
  \mathcal{E}}{\partial \mathfrak{q}} = \frac{1}{\epsilon}(-H(\tau) \mathfrak{q}(2I_N-\mathfrak{q}^T \mathfrak{q}-\mathfrak{p}^T \mathfrak{p}) +  \mathfrak{q}(\mathfrak{q}^T H(\tau) \mathfrak{q} + \mathfrak{p}^T H(\tau) \mathfrak{p}) ).
\end{split}
    \end{split}
\end{equation}
Similar with the case when $N=1$ (Proposition \ref{prop:PTHamiltonian}),
we can show that
$$\mathfrak{p}^T\mathfrak{p} + \mathfrak{q}^T\mathfrak{q} = I_{N}$$
provided the orthonormal initial value condition.
Therefore the solution to the Hamiltonian system \eqref{eqn:PTHamiltonianmulti}
can exactly form a set of solutions to the PT dynamics.

Due to the straightforward generalization as described above,  unless
otherwise noted, we will focus on the case $N=1$ for the rest of the
paper.

\section{Time discretization}\label{sec:numerical}

When the spectral radius of the Hamiltonian is relatively small and
$\epsilon\sim \Or(1)$, explicit time integrators such as the 4th order
Runge-Kutta method (RK4) and the Strang splitting method can be very efficient,
and can be applied to both the Schr\"odinger dynamics and the PT 
dynamics.
However, the advantage of propagating the PT dynamics can become clearer when $\epsilon$ becomes
small or when the spectral radius of $H$ becomes very large, 
which is typical in \eg TDDFT calculations.
In this
scenario, all explicit
time integrators must take a very small time step, which may become
very costly.  It should be noted that in the
Schr\"odinger dynamics, the solution often oscillates rapidly on the
time scale of $\epsilon$ as indicated in Eq.~\eqref{eqn:psiphasefactor}.  
Standard implicit discretization schemes, such as the implicit midpoint
rule and the Crank-Nicolson scheme, aim at interpolating such rapidly moving
curves by low order polynomials. Therefore the time step must still be
kept on the order of $\epsilon$ to meet the accuracy requirement,
even though the numerical scheme itself may have a large stability
region or even A-stable~\cite{HairerNorsettWanner1987}.

On the other hand, as discussed in Section~\ref{sec:deriv}, 
the PT dynamics transforms the fast oscillating
wave function $\psi(t)$ into a potentially slowly
oscillating wave function $\varphi(t)$ (as in
Fig.~\ref{fig:example_NLS_wave} (a)). 
This makes it feasible to approximate $\varphi(t)$ using a low order
polynomial approximation.  This statement will be further quantified by
numerical results in Section~\ref{sec:numer}. Combined with an implicit
time discretization scheme with a large stability region, we may expect
that the PT dynamics can be discretized with a much larger time step
than that in the Schr\"odinger dynamics.

The Hamiltonian structure of the PT dynamics further invites the usage
of a symplectic scheme for achieving long time accuracy and stability.
The simplest symplectic and implicit scheme is the implicit mid-point
rule, also known as the Gauss-Legendre method of order 2
(GL2). We use a uniform time discretization $t_{n}=n h$, and $h$ is the
time step size.  
With some abuse of notations, we denote by $\varphi(t_{n})$ the exact
solution at $t_{n}$, and $\varphi_{n}$ the numerical approximation to
$\varphi(t_{n})$.  Correspondingly we define 
\[
P_{n} = \varphi_{n}\varphi_{n}^{*}, \quad H_{n}=H(t_{n},P_{n}).
\]
\REV{It would also be helpful to define the effective nonlinear Hamiltonian $H^e(t,\varphi)$ as 
\begin{align*}
    H^e &= H(2-\varphi^*\varphi)-(\varphi^*H\varphi) I, \text{ for Eq.~\eqref{eqn:PTLSEsingerH}, }\\
    H^e &= (H_0+ 2g|\varphi|^2)(2-\varphi^*\varphi)-(\varphi^*H_0\varphi) I - g(\varphi^*|\varphi|^2\varphi) I - g|\varphi|^2, \text{ for Eq.~\eqref{eqn:PTNLSexampleHam}. }
\end{align*}
Then the Hamiltonian equations~\eqref{eqn:PTLSEsingerH} and~\eqref{eqn:PTNLSexampleHam} can be written in a uniform form 
\begin{equation}\label{PTHamUniform}
    \I \epsilon \partial_t\varphi = H^e\varphi{.}
\end{equation}
}

The PT-Ham-GL2 discretization for discretizing the Hamiltonian
equation~\eqref{eqn:PTLSEsingerH} \REV{and~\eqref{eqn:PTNLSexampleHam}} therefore becomes \REV{
\begin{equation}\label{eqn:PTHamiltonian-GL2}
    \begin{split}
    \varphi_{n+1} &= \varphi_n + \frac{h}{\I \epsilon}
    H^e_{n+\frac{1}{2}}\wt{\varphi}
    {,}\\
    \wt{\varphi} &= \frac{1}{2}(\varphi_n + \varphi_{n+1}){,} 
    \end{split}
\end{equation}
Here $\wt{\varphi}$ can be interpreted as the approximation to
$\varphi(t_{n+\frac12})$ at the half time step, and}
\[
    H_{n+\frac{1}{2}}^e:= H^e(t_{n+\frac{1}{2}},\wt{\varphi}).
\]
Note that the normalization condition $\wt{\varphi}^*\wt{\varphi} \to 1$
holds only in the limit $h \to 0$, but $\wt{\varphi}^*\wt{\varphi} \neq
1$ in general. 
Eq.~\eqref{eqn:PTHamiltonian-GL2} is a set of nonlinear equations for
$\varphi_{n+1}$, and need to be solved iteratively. This can be viewed
as a fixed point problem of the form
\[
\varphi = \mathfrak{F}(\varphi),
\]
\REV{where the mapping $\mathfrak{F}$ is explicitly defined as}
\begin{equation}
  \mathfrak{F}(\varphi) = \varphi_n + \frac{h}{\I \epsilon}
H_{n+\frac{1}{2}}^e\wt{\varphi},
    \quad \wt{\varphi} = \frac{1}{2}(\varphi_n + \varphi).
  \label{}
\end{equation}
Assuming the fixed point exists and is unique, we may associate
$\varphi_{n+1}$ with the fixed point, and then move to the next time
step. We may use any nonlinear equation solving technique to solve such
fixed point problem~\cite{Kelley1999}.  
In this work, we use the Anderson
mixing~\cite{Anderson1965} method, which is a simplified Broyden-type
method widely used in electronic structure
calculations~\cite{LinYang2013}.

\REV{The PT-Ham-GL2 scheme can be simplified by directly applying the GL2
discretization to the PT dynamics~\eqref{eqn:PTLSEsingle} and~\eqref{eqn:PTNLSexample}, with the efficient Hamiltonians to be 
defined as 
\begin{align*}
    H^e &= H-(\varphi^*H\varphi) I, \text{ for Eq.~\eqref{eqn:PTLSEsingle}, }\\
    H^e &= H_0+g|\varphi|^2-(\varphi^*H_0\varphi) I - g(\varphi^*|\varphi|^2\varphi) I, \text{ for Eq.~\eqref{eqn:PTNLSexample}.}
\end{align*}
Again note that, unlike the continuous case, PT-GL2 is not equivalent 
to PT-Ham-GL2 since $\wt{\varphi}^*\wt{\varphi} \neq 1$ in general. 
Nevertheless, the norm of the numerical solutions obtained by GL2 
at the discretized time points $t_n$ are indeed conserved, 
which is summarized in the following proposition.
\begin{prop}
    Suppose $\varphi_n$ is the numerical solution obtained by 
    applying GL2 to one of the following PT dynamics,~\eqref{eqn:PTLSEsingerH},~\eqref{eqn:PTNLSexampleHam},~\eqref{eqn:PTLSEsingle} and~\eqref{eqn:PTNLSexample}. Assume that $I - \frac{h}{2\I\epsilon}H^e_{n+\frac{1}{2}}$ is always invertible in each step, then $\|\varphi_n\|_2 = \|\varphi_0\|_2$.
\end{prop}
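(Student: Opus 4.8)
The plan is to exploit the fact that, in each of the four cases, the GL2 step expresses $\varphi_{n+1}$ as a Cayley transform built from a \emph{Hermitian} effective Hamiltonian applied to $\varphi_n$, together with the standard fact that the Cayley transform of a skew-Hermitian matrix is unitary.

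First I would record that for each of~\eqref{eqn:PTLSEsingerH},~\eqref{eqn:PTNLSexampleHam},~\eqref{eqn:PTLSEsingle} and~\eqref{eqn:PTNLSexample} the associated effective Hamiltonian $H^e = H^e(t,\varphi)$ — so that the dynamics takes the uniform form $\I\epsilon\partial_t\varphi = H^e\varphi$ as in~\eqref{PTHamUniform} — is a Hermitian matrix for every fixed $t$ and $\varphi$. This is immediate from the explicit formulas: $H$ (respectively $H_0$) is self-adjoint, $g|\varphi|^2$ is a real diagonal matrix, and each scalar prefactor that appears, namely $2-\varphi^*\varphi$, $\varphi^*H\varphi$, $\varphi^*H_0\varphi$, and $\varphi^*|\varphi|^2\varphi = \sum_i |\varphi_i|^4$, is real; hence every summand of $H^e$ is Hermitian.

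Next I would rewrite the GL2 update. Substituting $\wt{\varphi} = \tfrac12(\varphi_n + \varphi_{n+1})$ into $\varphi_{n+1} = \varphi_n + \tfrac{h}{\I\epsilon} H^e_{n+\frac12}\wt{\varphi}$ (equation~\eqref{eqn:PTHamiltonian-GL2}, and identically for the non-Hamiltonian forms with the simplified $H^e$) and collecting terms gives
\[
\Bigl(I - \tfrac{h}{2\I\epsilon} H^e_{n+\frac12}\Bigr)\varphi_{n+1} = \Bigl(I + \tfrac{h}{2\I\epsilon} H^e_{n+\frac12}\Bigr)\varphi_n .
\]
Set $A := \tfrac{h}{2\I\epsilon} H^e_{n+\frac12}$. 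Since $H^e_{n+\frac12}$ is Hermitian and $h,\epsilon\in\RR$, we have $A^* = -A$, i.e. $A$ is skew-Hermitian, so its spectrum is purely imaginary and both $I\pm A$ are invertible (this is slightly stronger than, and consistent with, the stated hypothesis). Therefore $\varphi_{n+1} = (I-A)^{-1}(I+A)\varphi_n =: C\varphi_n$, where $C$ is the Cayley transform of $A$; the two factors commute since they are polynomials in $A$.

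Finally I would invoke unitarity of the Cayley transform of a skew-Hermitian matrix: using $A^* = -A$ and the commutativity of $I\pm A$, a short computation gives $C^*C = (I-A)(I+A)^{-1}(I-A)^{-1}(I+A) = I$. (Alternatively, one can take squared $\ell^2$-norms in the displayed identity directly: both $\|(I-A)\varphi_{n+1}\|_2^2$ and $\|(I+A)\varphi_n\|_2^2$ equal $w^*(I - A^2)^{-1}w$ after the substitution $w = (I+A)\varphi_n$; the Cayley-transform formulation is cleaner.) Hence $\|\varphi_{n+1}\|_2 = \|C\varphi_n\|_2 = \|\varphi_n\|_2$, and induction on $n$ yields $\|\varphi_n\|_2 = \|\varphi_0\|_2$. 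I do not expect a genuine obstacle; the only point requiring care is the bookkeeping check that $H^e$ is Hermitian in each of the four cases and that the GL2 discretization of the non-Hamiltonian dynamics~\eqref{eqn:PTLSEsingle} and~\eqref{eqn:PTNLSexample} indeed shares this affine Cayley structure with a Hermitian $H^e$.
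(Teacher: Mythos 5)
Your proposal is correct and follows essentially the same route as the paper: both rewrite the GL2 step as $\bigl(I - \tfrac{h}{2\I\epsilon}H^e_{n+\frac12}\bigr)\varphi_{n+1} = \bigl(I + \tfrac{h}{2\I\epsilon}H^e_{n+\frac12}\bigr)\varphi_n$ and use the Hermiticity of $H^e$ together with the commutativity of the two factors, the paper by directly expanding $\varphi_{n+1}^*\varphi_{n+1}$ and you by phrasing the same algebra as unitarity of the Cayley transform of a skew-Hermitian matrix. Your explicit check that each $H^e$ is Hermitian, and your observation that the invertibility hypothesis is automatic in this case, are consistent with (and slightly sharpen) the paper's argument.
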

\begin{proof}
   We consider the GL2 scheme~\eqref{eqn:PTHamiltonian-GL2} for the 
   uniform form~\eqref{PTHamUniform}. It suffices to prove that 
   $\|\varphi_{n+1}\|_2 = \|\varphi_n\|_2$ for any $n$. 
   We first substitute $\wt{\varphi}$ by $\frac{1}{2}(\varphi_n+\varphi_{n+1})$ and rewrite GL2 as 
   \[
       \left(I - \frac{h}{2\I\epsilon}H_{n+\frac{1}{2}}^e\right)\varphi_{n+1} = \left(I + \frac{h}{2\I\epsilon}H_{n+\frac{1}{2}}^e\right)\varphi_{n}.
   \]
   Note that for all defined $H^e$, ${H^e}^{*} = H^e$, then 
   \begin{align*}
       &\varphi_{n+1}^*\varphi_{n+1} \\
       = &\varphi_n^*\left(I + \frac{h}{2\I\epsilon}H_{n+\frac{1}{2}}^e\right)^*{\left(I - \frac{h}{2\I\epsilon}H_{n+\frac{1}{2}}^e\right)^*}^{-1}\left(I - \frac{h}{2\I\epsilon}H_{n+\frac{1}{2}}^e\right)^{-1}\left(I + \frac{h}{2\I\epsilon}H_{n+\frac{1}{2}}^e\right)\varphi_n \\
       = &\varphi_n^*\left(I - \frac{h}{2\I\epsilon}H_{n+\frac{1}{2}}^e\right){\left(I + \frac{h}{2\I\epsilon}H_{n+\frac{1}{2}}^e\right)}^{-1}\left(I - \frac{h}{2\I\epsilon}H_{n+\frac{1}{2}}^e\right)^{-1}\left(I + \frac{h}{2\I\epsilon}H_{n+\frac{1}{2}}^e\right)\varphi_n \\
       = &\varphi_n^*\left(I - \frac{h}{2\I\epsilon}H_{n+\frac{1}{2}}^e\right){\left(I - \frac{h}{2\I\epsilon}H_{n+\frac{1}{2}}^e\right)}^{-1}\left(I + \frac{h}{2\I\epsilon}H_{n+\frac{1}{2}}^e\right)^{-1}\left(I + \frac{h}{2\I\epsilon}H_{n+\frac{1}{2}}^e\right)\varphi_n \\
       = &\varphi_n^*\varphi_n{,}
   \end{align*}
   where the second to the last line uses the fact that $I - \frac{h}{2\I\epsilon}H_{n+\frac{1}{2}}^e$ and $I + \frac{h}{2\I\epsilon}H_{n+\frac{1}{2}}^e$ commute. 
\end{proof}
}

Similarly we may use other time-reversible (but not symplectic) 
schemes~\cite{HairerLubichWanner2006},
such as the trapezoidal rule discretization (known in this context as
the Crank-Nicolson method).  So the PT-CN scheme becomes\REV{
\begin{equation}\label{eqn:PT-CN}
    \varphi_{n+1} = \varphi_n + \frac{h}{2\I \epsilon}
    H_n^e\varphi_n
     + \frac{h}{2\I \epsilon}
    H_{n+1}^e\varphi_{n+1}{,}
\end{equation}
Here $H_n^e = H^e(t_n,\varphi_n), H_{n+1}^e = H^e(t_{n+1},\varphi_{n+1})$. }In both PT-GL2
and PT-CN schemes, we need to solve $\varphi_{n+1}$ with nonlinear
equation solvers as before.  Although these schemes are not symplectic schemes \REV{and the 2-norm of the numerical solution by PT-CN is not strictly conserved} as
in PT-Ham-GL2, numerical results in Section~\ref{sec:numer} 
indicate that the performance of all the
three schemes can be very comparable in practice.

Following the discussion above, we may readily obtain the corresponding
scheme for $N>1$ case, as well as higher order and symplectic time
discretization schemes, such as the Gauss-Legendre collocation
methods~\cite{Iserles2009} for the PT dynamics.

\section{Analysis in the near adiabatic regime}\label{sec:analysis}

In this section, we demonstrate the advantage of the PT dynamics 
by analyzing the accuracy of the
discretized PT dynamics in the near adiabatic regime. Our main result is
that for $h\leq \Or(\epsilon)$, a proper discretization of the PT
dynamics gains one extra order of accuracy in $\epsilon$ compared to
that of the Schr\"odinger dynamics.

We extend the quantum adiabatic theorem~\cite{Kato1950,
AvronElgart1999,Teufel2003} to the PT dynamics, which shows that the PT
wave function $\varphi(t)$ can be decomposed into a component of which
the oscillation is independent of $\epsilon$ and the magnitude is $\Or(1)$, and a component that
is highly oscillatory with $\Or(\epsilon)$ magnitude.  This leads to the
desired result in terms of the local truncation error. We then obtain
the global error estimate from the standard results of symplectic
integrators due to the Hamiltonian structure of the dynamics. 


Again, we restrict the scope of the theoretical analysis to the
time-dependent linear system with $N=1$.  While the generalization to
the case $N>1$ is straightforward, the analysis beyond the linear system
can be considerably more difficult. One important difficulty is 
the lack of the spectral theory \REV{and the corresponding adiabatic theorem} for general nonlinear operators~\cite{Sparber2016}, \REV{which play important roles as being shown in our proof, }
though progress has been made in recent years for certain types of
the nonlinear problems such as the Schr\"odinger equation with weak
nonlinearity~\cite{Sparber2016}, and certain quantum-classical molecular dynamics
(QCMD) models~\cite{BornemannSchutte1999}. 
\REV{We remark that there has been recent progress~\cite{KammererJoye2019} proving the adiabatic theorem under a more general nonlinear setting. Extension of the work of ~\cite{KammererJoye2019} to the nonlinear PT dynamics will be our future work.}


%

We make the following assumptions through this section, 
which defines the near adiabatic regime:
\begin{enumerate}
    \item $H: [0,T] \rightarrow \CC^{d\times d}$ is a Hermitian-valued and
      smooth map. The norms $\norm{H(t)}_{2}$ and
      $\norm{H^{(k)}(t)}_{2}$ for all the
      time derivatives are bounded independently of
      $\epsilon$ and $t\in [0,T]$.
    \item There exists a continuous function 
    $\lambda(t) \in \text{spec}(H(t))$
    which is a simple eigenvalue of $H(t)$ and 
    stays separated from the rest of the spectrum, \ie
    there exists a positive constant $\Delta$ such that
    \begin{equation}\label{}
        \text{dist}(\lambda(t), \text{spec}(H(t)) \backslash \{\lambda(t)\})
        \geq \Delta,
        \quad \forall t \in [0,T] {.}
    \end{equation}
  \item The initial state $\varphi(0)$ is the normalized eigenvector of
    $H(0)$ associated with the eigenvalue $\lambda(0)$.
\end{enumerate}
The assumption 1 ensures that the solutions of both the
Schr\"odinger dynamics and the PT dynamics are smooth with respect to
$t$.
The assumption 2 is called the gap condition~\cite{Teufel2003}. 


Before we continue, 
we would like to investigate a useful conclusion which can be 
directly derived from the above assumptions. 
Let $Q(t)$ denote the projector on the eigenspace
corresponding to $\lambda(t)$. 
$Q(t)$ can be expressed by the Riesz representation 
of the projector as 
\begin{equation}\label{eqn:RieszQ}
    Q(t) = -\frac{1}{2\pi \I}\int_{\Gamma(t)} R(z,t)dz
\end{equation}
in which $R(z,t) = (H(t)-z)^{-1}$ is the resolvent at time $t$ and the
complex contour can be chosen as
$\Gamma(t) = \{z \in \mathbb{C}: |z - \lambda(t)| = \Delta/2\}$.
Note that the assumption 2 assures that such representation is well-defined 
and, together with assumption 1, $Q(t)$ is actually also a smooth bounded map, 
which is summarized in the following lemma. 

\begin{lem}\label{LemBoundQ}
  The norms of all time derivatives $\norm{Q^{(k)}(t)}$ are bounded
  independently of $\epsilon$. 
\end{lem}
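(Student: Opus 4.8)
The plan is to prove Lemma~\ref{LemBoundQ} by differentiating the Riesz-integral representation \eqref{eqn:RieszQ} of $Q(t)$ and bounding the resulting integrands uniformly in $t$. The key point is that the contour $\Gamma(t)$ moves with $t$, so I first want to change variables to freeze the contour; then all $t$-dependence sits in the resolvent, which can be controlled by the gap condition and Assumption~1.

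\medskip\noindent\textbf{Step 1: Freeze the contour.} Substitute $z = \lambda(t) + \tfrac{\Delta}{2}e^{\I\theta}$ in \eqref{eqn:RieszQ}, so that
\[
  Q(t) = -\frac{1}{2\pi\I}\int_0^{2\pi} R\!\left(\lambda(t)+\tfrac{\Delta}{2}e^{\I\theta},\,t\right)\,\tfrac{\Delta}{2}\I e^{\I\theta}\,\ud\theta.
\]
Now the domain of integration is $\theta$-independent of $t$, so $Q^{(k)}(t)$ is obtained by differentiating the integrand $k$ times in $t$ and integrating over $\theta$. Since $\lambda(t)$ is a simple isolated eigenvalue depending smoothly on $t$ (standard analytic perturbation theory, e.g.\ Kato, using Assumption~1 and Assumption~2), $\lambda \in C^\infty([0,T])$ with all derivatives bounded independently of $\epsilon$ (indeed $H$ and hence $\lambda$ carry no $\epsilon$-dependence at all).

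\medskip\noindent\textbf{Step 2: Bound the resolvent and its $t$-derivatives on the contour.} For $z\in\Gamma(t)$ we have $\mathrm{dist}(z,\mathrm{spec}(H(t)))\geq \Delta/2$, hence $\|R(z,t)\|_2 \le 2/\Delta$. Differentiating the identity $R(z,t)(H(t)-z) = I$ in $t$ gives $\partial_t R = -R\,H'(t)\,R$, and more generally each $\partial_t^m R$ is a finite sum of products of the form $R\,H^{(i_1)}R\,H^{(i_2)}R\cdots$ (a Leibniz/Faà-di-Bruno expansion), with each factor bounded by $2/\Delta$ or by $\sup_t\|H^{(i_j)}(t)\|_2$. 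When the argument $z = \lambda(t)+\tfrac\Delta2 e^{\I\theta}$ itself depends on $t$, the extra terms from $\partial_t z = \lambda'(t)$ produce additional resolvent factors (since $\partial_z R = R^2$), again bounded the same way. Combining with the bounds on $\lambda^{(k)}$ from Step~1, every derivative of the integrand is bounded uniformly in $\theta$, $t$, and $\epsilon$.

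\medskip\noindent\textbf{Step 3: Conclude.} Integrating the uniform bound over the compact interval $\theta\in[0,2\pi]$ yields $\|Q^{(k)}(t)\|_2 \le C_k$ for a constant $C_k$ depending only on $k$, $\Delta$, and the bounds on $\|H^{(j)}\|_2$ for $j\le k$, but not on $\epsilon$ or $t$. This is exactly the claim. I do not expect a serious obstacle here: the only mild subtlety is the moving contour, which Step~1 disposes of, and the purely combinatorial bookkeeping of which resolvent/derivative factors appear in $\partial_t^m$ of the integrand, which is routine once the pattern $\partial_t R = -RH'R$, $\partial_z R = R^2$ is in hand.
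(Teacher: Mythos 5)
Your proof is correct, and it shares the paper's basic skeleton (Riesz representation \eqref{eqn:RieszQ}, resolvent bound $\norm{R(z,t)}_2\le 2/\Delta$ from the gap, and Leibniz expansion of resolvent derivatives controlled by Assumption~1), but it handles the one genuinely delicate point --- the $t$-dependence of the contour --- in a different way. You differentiate \emph{through} the moving contour by parametrizing $z=\lambda(t)+\tfrac{\Delta}{2}e^{\I\theta}$, which forces you to import the smoothness of $\lambda(t)$ with derivative bounds uniform in $\epsilon$ from Kato-type perturbation theory (note that Assumption~2 only states continuity of $\lambda$, so this is an extra, though standard, ingredient; in finite dimension it can be made quantitative via the implicit function theorem applied to the characteristic polynomial at a simple root, where $\abs{\partial_\lambda \det(\lambda I-H)}\ge \Delta^{d-1}$, and your parenthetical remark that $H$ ``carries no $\epsilon$-dependence'' is slightly stronger than what Assumption~1 actually asserts, though harmless since the resulting bounds depend only on $\Delta$ and $\sup_t\norm{H^{(j)}(t)}_2$). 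The paper instead avoids differentiating the contour altogether: by continuity of $\lambda$ and compactness of $[0,T]$ it replaces $\Gamma(t)$ by a locally constant contour $\Gamma(s_j)$ on a finite cover, so that $Q^{(k)}(t)$ is just the integral of $R^{(k)}(z,t)$ over a fixed contour, with no eigenvalue-derivative terms and no appeal to eigenvalue smoothness. Your route buys the additional (and reusable) fact that $\lambda\in C^\infty$ with $\epsilon$-uniform derivative bounds, at the price of that extra perturbation-theory input and the bookkeeping of the $\lambda^{(k)}R^2$-type terms; the paper's route is more economical in hypotheses and slightly shorter. Either argument establishes the lemma.
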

\begin{proof}
    We follow the technique in \cite{Teufel2003}. 
    The boundedness of $Q(t)$ directly follows 
    from the Riesz representation~\eqref{eqn:RieszQ}
    and the boundedness of $R(z,t)$ over the contour $\Gamma(t)$. 
    The contour $\Gamma(t)$ depends on $t$. To avoid taking time derivatives
    over the contour, note that the continuity of $\lambda(t)$ implies
    that for any $s \in [0,T]$, there exists a neighborhood $B(s,\delta_{s})$
    such that
    $$|z - \lambda(t)| \geq \Delta/4, \quad \forall t \in B(s, \delta_{s})\cap[0,T], \ z \in \Gamma(s) {.}$$
    By finding a finite cover $\bigcup_{j=1}^{n}B(s_j,\delta_{s_j}) \supset
    [0,T]$, for each $t \in [0,T]$, there exists a $s_j$
    such that $t \in B(s_j, \delta_{s_j})$ and we can rewrite $Q(t)$ as
    \begin{equation}
        Q(t) = -\frac{1}{2\pi \I}\int_{\Gamma(s_j)} R(z,t)dz {.}
    \end{equation}
    Such $s_j$ remains unchanged locally, hence
    \begin{align*}
      Q^{(k)}(t) = -\frac{1}{2\pi \I}\int_{\Gamma(s_j)} R^{(k)}(z,t)dz{.}
    \end{align*}
    The boundedness of $Q^{(k)}(t)$ can be directly assured by the boundedness
    of $H^{(k)}(t)$.
\end{proof}

\subsection{Adiabatic theorem}

First let us define the adiabatic evolution $\varphi_A(t)$ as
the solution to the following initial value problem 
\begin{equation}\label{eqn:AdiabaticWave2}
    \I \epsilon\partial_t \varphi_A = \I\epsilon[\dot{Q}, Q] \varphi_A,
    \quad \varphi_A(0) = \varphi(0) {.}
\end{equation}
Since the matrix $\I\epsilon [\dot{Q}, Q]$ is Hermitian,
$\norm{\varphi_A}_{2} = 1$ holds for all $t$.
Following the same proof of Eq.~\eqref{eqn:Pphi1} in Proposition
\ref{prop:ptrelation}, we find that 
$\varphi_A$ is an eigenvector of $H(t)$ corresponding 
to $\lambda(t)$, \ie 
$Q(t)\varphi_A(t) = \varphi_A(t)$ holds for all $t \in [0,T]$.


In the near adiabatic regime, we may separate $\varphi(t)$ into the
smooth component $\varphi_{A}$ and a remainder term. This is called 
the adiabatic theorem and is given in Theorem~\ref{thm:adiabatic1}.
\begin{thm}\label{thm:adiabatic1}
    Let $\varphi(t)$ follow the PT dynamics~\eqref{eqn:PTLSEsingle}, and 
    let $\varphi_A(t)$ follow the adiabatic evolution 
    as defined in Eq.~\eqref{eqn:AdiabaticWave2}. 
    Then the following decomposition
    \begin{equation}
        \varphi(t) = \varphi_A(t) + \epsilon \varphi_R(t)
    \end{equation}
    holds up to time $T = \Or(1)$. Furthermore,  $\varphi_R(t)$ is
    infinitely differentiable, and $\|\varphi_R(t)\|_2$ is bounded independently of $\epsilon$. 
\end{thm}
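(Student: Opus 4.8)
The plan is to reduce the claim to the \emph{standard} quantum adiabatic theorem for the density matrix, and then transfer the resulting estimate from projectors to the parallel transport wave function. Three structural facts drive this. By Proposition~\ref{prop:ptrelation}, $P(t)=\varphi(t)\varphi^{*}(t)$ solves the von Neumann equation~\eqref{eqn:vonNeumann} with $P(0)=Q(0)$, because $\varphi(0)$ is the normalized $\lambda(0)$-eigenvector. The adiabatic evolution $\varphi_{A}$ of~\eqref{eqn:AdiabaticWave2} is a unit $\lambda(t)$-eigenvector of $H(t)$ (as noted just after~\eqref{eqn:AdiabaticWave2}), so $Q(t)=\varphi_{A}(t)\varphi_{A}^{*}(t)$. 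And $[H(t),Q(t)]=0$ while $Q,\dot Q$ are bounded uniformly in $\epsilon$ by Lemma~\ref{LemBoundQ}.

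First, since $P$ solves~\eqref{eqn:vonNeumann} starting from the spectral projector $Q(0)$, the classical adiabatic theorem under the gap condition gives $\norm{P(t)-Q(t)}_{2}\le C\epsilon$ uniformly on $[0,T]$ with $C$ independent of $\epsilon$ (see, e.g.,~\cite{Kato1950,AvronElgart1999,Teufel2003}). Because $P$ and $Q$ are rank-one orthogonal projectors, $\norm{P-Q}_{2}^{2}=1-\abs{\varphi_{A}^{*}\varphi}^{2}=\norm{(I-Q)\varphi}_{2}^{2}$, so I obtain $\norm{\varphi_{\perp}(t)}_{2}\le C\epsilon$ and $\abs{\varphi_{A}^{*}(t)\varphi(t)}^{2}\ge 1-C^{2}\epsilon^{2}$, writing $\varphi_{\perp}:=(I-Q)\varphi$ and $\varphi_{\parallel}:=Q\varphi$. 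Splitting $\varphi=\varphi_{\parallel}+\varphi_{\perp}$ and using $[H,Q]=0$ together with $(H-\lambda)\varphi_{\parallel}=0$, the cross term drops and the Rayleigh quotient $\mu(t):=\varphi^{*}H\varphi$ satisfies $\mu(t)-\lambda(t)=\varphi_{\perp}^{*}H\varphi_{\perp}-\lambda\norm{\varphi_{\perp}}_{2}^{2}=\Or(\epsilon^{2})$, using the boundedness of $\norm{H}_{2}$ from Assumption~1.

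Now write $\zeta(t):=\varphi_{A}^{*}(t)\varphi(t)=\rho(t)e^{\I\theta(t)}$ with $\rho=\abs{\zeta}\ge 1-C^{2}\epsilon^{2}>0$ and $\theta(0)=0$. Differentiating $\zeta$, inserting~\eqref{eqn:PTLSEsingle} and~\eqref{eqn:AdiabaticWave2}, and using $\varphi_{A}^{*}H=\lambda\varphi_{A}^{*}$ together with $\varphi_{A}^{*}[\dot Q,Q]\varphi=\varphi_{A}^{*}[\dot Q,Q]\varphi_{\perp}$ (the $Q\varphi$-part vanishes because $\varphi_{A}^{*}\dot Q Q=0$, which follows from $\varphi_{A}^{*}(I-Q)=0$ and $\dot Q=\dot Q Q+Q\dot Q$), one finds
\[
  \dot\zeta=\frac{\lambda-\mu}{\I\epsilon}\,\zeta+\Or(\epsilon);
\]
taking imaginary parts after dividing by $e^{\I\theta}$ gives $\dot\theta=-\tfrac{\lambda-\mu}{\epsilon}+\Or(\epsilon)=\Or(\epsilon)$ by the bound on $\mu-\lambda$, hence $\abs{\theta(t)}\le C\epsilon$ on $[0,T]$. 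This is exactly where the parallel transport gauge enters: the extra term $-\varphi(\varphi^{*}H\varphi)$ in~\eqref{eqn:PTLSEsingle} replaces the $\Or(1/\epsilon)$ phase-rotation rate $\lambda/\epsilon$ of the Schr\"odinger gauge by the $\Or(\epsilon)$ rate $(\lambda-\mu)/\epsilon$. Consequently $\Re\zeta=\rho\cos\theta=1-\Or(\epsilon^{2})$, so $\norm{\varphi-\varphi_{A}}_{2}^{2}=2-2\Re\zeta=\Or(\epsilon^{2})$. Setting $\varphi_{R}:=\epsilon^{-1}(\varphi-\varphi_{A})$ then gives the uniform bound $\sup_{[0,T]}\norm{\varphi_{R}(t)}_{2}\le C$ with $C$ independent of $\epsilon$; and $\varphi_{R}\in C^{\infty}([0,T])$ because $\varphi$ and $\varphi_{A}$ solve ODEs whose right-hand sides are smooth in $(t,\varphi)$ (using $H\in C^{\infty}$ from Assumption~1 and Lemma~\ref{LemBoundQ} for the coefficient $\I\epsilon[\dot Q,Q]$).

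I expect the main obstacle to be the passage from the projector-level bound to the sharp $\Or(\epsilon)$ (rather than merely $\Or(\sqrt\epsilon)$) vector-level bound: one must control not just the modulus $\rho$ but the relative phase $\theta$, and $\theta=\Or(\epsilon)$ relies on the second-order cancellation $\mu-\lambda=\Or(\epsilon^{2})$ and the parallel transport structure. A more self-contained alternative avoids the adiabatic theorem for $P$ by working directly with $w=\varphi-\varphi_{A}$, which satisfies $\I\epsilon\dot w=(H-\lambda)w-\I\epsilon[\dot Q,Q]\varphi_{A}+(\lambda-\mu)\varphi$ with $w(0)=0$ and $\lambda-\mu=\Or(\norm{w}_{2}^{2})$ by norm conservation; there the difficulty migrates to the stiff term $\epsilon^{-1}(H-\lambda)w$, handled by the usual adiabatic device (decomposing along $Q$ and $I-Q$, inverting $H-\lambda$ on $\Ran(I-Q)$ via the gap condition, and integrating by parts against the fast phase) together with a continuation/bootstrap argument absorbing the nonlinear remainder.
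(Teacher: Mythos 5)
Your argument is correct, but it takes a genuinely different route from the paper. You reduce everything to the \emph{standard}, gauge-invariant adiabatic theorem at the level of the projector $P(t)=\varphi(t)\varphi^{*}(t)$ (giving $\norm{P-Q}_{2}=\Or(\epsilon)$, hence $\norm{(I-Q)\varphi}_{2}=\Or(\epsilon)$, $\abs{\varphi_A^{*}\varphi}=1-\Or(\epsilon^{2})$ and the Rayleigh-quotient cancellation $\varphi^{*}H\varphi-\lambda=\varphi_{\perp}^{*}(H-\lambda)\varphi_{\perp}=\Or(\epsilon^{2})$), and then close the gap between projectors and vectors by a scalar ODE for the overlap $\zeta=\varphi_A^{*}\varphi$, where the PT gauge is exactly what turns the phase-rotation rate into $(\lambda-\mu)/\epsilon=\Or(\epsilon)$; the paper instead re-derives the adiabatic estimate \emph{for the PT dynamics itself}, introducing an intermediate eigenvector evolution $\varphi_B$ (Eq.~\eqref{eqn:AdiabaticWave}), adapting the Avron--Elgart machinery ($[\dot Q,Q]=[H,X]$ plus integration by parts) to bound $\varphi-\varphi_B$, and then comparing $\varphi_B$ with $\varphi_A$ by Duhamel using $(H-\varphi^{*}H\varphi)\varphi_B=\Or(\epsilon^{2})$. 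Your version is shorter and cleanly isolates the only place the PT structure matters (the phase equation for $\zeta$), at the price of importing the classical theorem as a black box and of relying on the rank-one structure (scalar overlap, $\norm{P-Q}_{2}^{2}=1-\abs{\zeta}^{2}$), so it does not generalize as transparently to $N>1$ as the paper's operator-level argument with $\mathcal{T}_B$; the paper's proof is self-contained and its $\varphi_B$ is reused in a later remark. Two small points you should state explicitly rather than leave implicit: (i) the PT dynamics conserves $\norm{\varphi}_{2}=1$ (immediate since $H-(\varphi^{*}H\varphi)I$ is Hermitian), which you need for $P$ to be a rank-one projector, for the Rayleigh-quotient identity, and for $\norm{\varphi-\varphi_A}_{2}^{2}=2-2\Re\zeta$; and (ii) the converse direction of Proposition~\ref{prop:ptrelation}, namely that $P=\varphi\varphi^{*}$ built from a PT solution solves the von Neumann equation with $P(0)=Q(0)$ --- this is a one-line computation (or follows from uniqueness of the PT initial value problem), but it is the direction you actually use, not the one literally proved there. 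With these recorded, and a smooth branch of the phase chosen via $\theta(t)=\Im\int_{0}^{t}\dot\zeta/\zeta\,\mathrm{d}s$ (legitimate since $\abs{\zeta}\ge 1-C\epsilon^{2}>0$), the chain $\rho=1-\Or(\epsilon^{2})$, $\theta=\Or(\epsilon)$, $\norm{\varphi-\varphi_A}_{2}^{2}=2(1-\rho\cos\theta)=\Or(\epsilon^{2})$ is sound and there is no circularity, because both inputs to the phase ODE come from the projector-level estimate obtained beforehand.
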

\begin{proof}
  The proof is organized according to the following three steps. 
  \begin{enumerate}
    \item Define another adiabatic evolution $\varphi_B$, 
      which satisfies an equation that resembles the PT dynamics. 
    \item Prove the adiabatic decomposition with respect to $\varphi_B$, 
      \ie there exists an infinitely differentiable function $\eta(t)$ such that
      $$\varphi(t) = \varphi_B(t) + \epsilon \eta(t), \quad \forall t\in [0,T] {,}$$
      where $\|\eta(t)\|_2$ is bounded independently of $\epsilon$. 
    \item Prove that the difference between $\varphi_B$ and $\varphi_A$ 
      is of $\Or(\epsilon)$. 
  \end{enumerate}

  1. Define $\mathcal{T}_{B}$ as the solution to the initial value
  problem
  \begin{equation}\label{eqn:AdiabaticEvolution}
    \I \epsilon \partial_t \mathcal{T}_{B}
    = (H - \varphi^*H\varphi + \I \epsilon[\dot{Q},Q])\mathcal{T}_{B}, \quad
    \mathcal{T}_{B}(0) = I{,}
  \end{equation}
  We define $\varphi_B$ according to
  $$\varphi_B(t) := \mathcal{T}_{B}(t)\varphi(0){,}$$
  which solves the initial value problem
  \begin{equation}\label{eqn:AdiabaticWave}
    \I \epsilon \partial_t \varphi_B
    = (H - \varphi^*H\varphi + \I \epsilon[\dot{Q},Q])\varphi_B, \ \ \ \
    \varphi_B(0) = \varphi(0){.}
  \end{equation}
  Since the matrix $(H - \varphi^*H\varphi + \I\epsilon[\dot{Q},Q])$ is Hermitian, 
  $\mathcal{T}_{B}$ is a unitary evolution,
  and $\varphi_B$ is a normalized vector.

  Next we show that $\varphi_B(t)$ is an eigenvector of $H(t)$ 
  corresponding to $\lambda(t)$, \ie
  \begin{equation}\label{eqn:AdiabaticvarphiB}
    Q(t)\varphi_B(t) = \varphi_B(t) {.}
  \end{equation}
  This can be done by showing that $Q\varphi_B$ and $\varphi_B$ solve
  the same initial value problem. 
  By the Leibniz rule and Eq.~\eqref{eqn:AdiabaticWave}, we have
  \begin{align*}
    \partial_t{(Q\varphi_B)} &= \dot{Q}\varphi_B + Q\dot{\varphi_B} \\
    &= \dot{Q}\varphi_B + Q[\dot{Q},Q]\varphi_B - \frac{\I}{\epsilon}Q(H - \varphi^*H\varphi)\varphi_B{.}
  \end{align*}
  Use the identities similar to~\eqref{eqn:Pidentity}, 
  $$\dot{Q} = \dot{Q}Q+Q\dot{Q}, \quad Q\dot{Q}Q = 0, \quad Q^2 = Q {,}$$
  we have
  \begin{align*}
    \dot{Q} + Q[\dot{Q},Q]
    &= \dot{Q}Q+Q\dot{Q} + Q\dot{Q}Q - Q^2\dot{Q} \\
    &= \dot{Q}Q
    = (\dot{Q}Q-Q\dot{Q})Q
    = [\dot{Q},Q]Q {.}
  \end{align*}
  Hence
  \begin{align*}
    \partial_t{(Q\varphi_B)} = [\dot{Q},Q]Q\varphi_B - \frac{\I}{\epsilon}Q(H - \varphi^*H\varphi)\varphi_B{.}
  \end{align*}
  Together with the identity $QH=HQ$, we have
  \begin{align*}
    \partial_t{(Q\varphi_B)} &= [\dot{Q},Q]Q\varphi_B - \frac{\I}{\epsilon}(H - \varphi^*H\varphi)Q\varphi_B \\
    &= -\frac{\I}{\epsilon}(H - \varphi^*H\varphi + i\epsilon[\dot{Q},Q])(Q\varphi_B){.}
  \end{align*}
  Furthermore, the initial condition 
  satisfies $Q(0)\varphi_{B}(0)=\varphi_{B}(0)=\varphi(0)$.
  Hence $Q\varphi_B$ solves the same initial value problem
  \eqref{eqn:AdiabaticWave} as $\varphi_B$. 

  In summary, in step 1 we define another adiabatic evolution $\varphi_B(t)$ 
  which is also an eigenstate of $H(t)$ corresponding to $\lambda(t)$ 
  (Eq.~\eqref{eqn:AdiabaticvarphiB}). 
  Therefore, $\varphi_A(t)$ and $\varphi_B(t)$ are both eigenstates of
  $H(t)$ differing at most by a choice of gauge.


  2. Now we estimate the distance between $\varphi(t)$ and $\varphi_B(t)$. 
  This can be done by mimicking the standard proof of the adiabatic theorem \cite{AvronElgart1999} with some modifications. 
  By the definition of $\varphi_B$,
  $$\|\varphi(t)-\varphi_B(t)\|_2 = \|\varphi(t)-\mathcal{T}_{B}(t)\varphi(0)\|_2
  = \|\mathcal{T}_{B}^{-1}(t)\varphi(t)-\varphi(0)\|_2 {.}$$
  Define $w(t) = \mathcal{T}_{B}^{-1}(t)\varphi(t)$, then 
  \begin{equation}\label{eqn:AdiabaticProofest}
    \|\varphi(t)-\varphi_B(t)\|_2 = \|w(t)-w(0)\|_2 = \left\|\int_0^t
    \dot{w}(s) ds\right\|_2 {.}
  \end{equation}
  In order to estimate $\dot{w}$,
  differentiate the equation $\mathcal{T}_{B}w = \varphi$ and we get
  \begin{equation}\label{eqn:defDw}
    \dot{w} = - \mathcal{T}_{B}^{-1}[\dot{Q},Q]\mathcal{T}_{B}w {.}
  \end{equation}
  Note that if we define
  $$X(t) = -\frac{1}{2\pi \I} \int_{\Gamma(s_j)} R(z,t)\dot{Q}(t)R(z,t)dz$$
  where $\Gamma(s_j)$ and $R(z,t)$ are defined in the proof of Lemma \ref{LemBoundQ},
  then $\|X\|_2$ and $\|\dot{X}\|_2$ are bounded independently of $\epsilon$, 
  and~\cite{AvronElgart1999,Teufel2003}
  $$[\dot{Q},Q] = [H, X] {.}$$
  Then 
  \begin{equation}\label{eqn:AdiabaticProofCom1}
    \dot{w} = - \mathcal{T}_{B}^{-1}[H,X]\mathcal{T}_{B}w 
    = -(\mathcal{T}_{B}^{-1}H)X\mathcal{T}_{B}w + \mathcal{T}_{B}^{-1}X(H\mathcal{T}_{B})w  {.}
  \end{equation}
  To compute the first part of Eq.~\eqref{eqn:AdiabaticProofCom1}, 
  we first take the time derivative of the identity $I = \mathcal{T}_{B}^{-1}\mathcal{T}_{B}$ and get
  \begin{equation}
    \mathcal{T}_{B}^{-1}H = -\I\epsilon\partial_t{(\mathcal{T}_{B}^{-1})} + (\varphi^*H\varphi)\mathcal{T}_{B}^{-1} - \I\epsilon \mathcal{T}_{B}^{-1}[\dot{Q},Q] {.}
  \end{equation}
  Then the first part of Eq.~\eqref{eqn:AdiabaticProofCom1} can be rewritten as
  \begin{equation}\label{eqn:AdiabaticProofCom2}
    \begin{split}
      -(\mathcal{T}_{B}^{-1}H)X\mathcal{T}_{B}w 
      &= \I\epsilon\partial_t{(\mathcal{T}_{B}^{-1})}X\mathcal{T}_{B}w + \I\epsilon \mathcal{T}_{B}^{-1}[\dot{Q},Q]X\mathcal{T}_{B}w - (\varphi^*H\varphi)\mathcal{T}_{B}^{-1}X\mathcal{T}_{B}w {.}
    \end{split}
  \end{equation}
  To compute the second part of Eq.~\eqref{eqn:AdiabaticProofCom1}, 
  rewrite Eq.~\eqref{eqn:AdiabaticEvolution} as 
  \begin{equation}
    H\mathcal{T}_{B} = \I\epsilon \dot{\mathcal{T}_{B}} 
    + (\varphi^*H\varphi)\mathcal{T}_{B} - \I\epsilon[\dot{Q},Q]\mathcal{T}_{B}{,}
  \end{equation}
  and then
  \begin{equation}\label{eqn:AdiabaticProofCom3}
    \mathcal{T}_{B}^{-1}X(H\mathcal{T}_{B})w 
    = \I\epsilon\mathcal{T}_{B}^{-1}X\dot{\mathcal{T}_{B}}w -\I\epsilon\mathcal{T}_{B}^{-1}X[\dot{Q},Q]\mathcal{T}_{B}w +(\varphi^*H\varphi)\mathcal{T}_{B}^{-1}X\mathcal{T}_{B}w {.}
  \end{equation}
  Sum up Eq.~\eqref{eqn:AdiabaticProofCom2} and~\eqref{eqn:AdiabaticProofCom3}, 
  then Eq.~\eqref{eqn:AdiabaticProofCom1} becomes
  \begin{equation}\label{eqn:AdiabaticDw}
    \dot{w} = \I\epsilon(\partial_t{(\mathcal{T}_{B}^{-1})}X\mathcal{T}_{B}+\mathcal{T}_{B}^{-1}X\dot{\mathcal{T}_{B}})w + \I\epsilon \mathcal{T}_{B}^{-1}[[\dot{Q},Q],X]\mathcal{T}_{B}w {.}
  \end{equation}
  In Eq.~\eqref{eqn:AdiabaticDw}, 
  the second term of the right hand side is already of $\Or(\epsilon)$. 
  Now we turn to the first term to treat the derivatives
  $\partial_t{(\mathcal{T}_{B}^{-1})}$ and $\dot{\mathcal{T}_{B}}$.
  By repeated usage of the Leibniz rule, Eq.~\eqref{eqn:AdiabaticDw} becomes
  \begin{equation}\label{eqn:AdiabaticDw2}
    \begin{split}
      \dot{w} &= \I\epsilon\partial_t{(\mathcal{T}_{B}^{-1}X\mathcal{T}_{B})}w 
      - \I\epsilon\mathcal{T}_{B}^{-1}\dot{X}\mathcal{T}_{B}w
      + \I\epsilon \mathcal{T}_{B}^{-1}[[\dot{Q},Q],X]\mathcal{T}_{B}w\\
      &= \I\epsilon\partial_t{(\mathcal{T}_{B}^{-1}X\mathcal{T}_{B}w)}
      - \I\epsilon\mathcal{T}_{B}^{-1}X\mathcal{T}_{B}\dot{w}
      - \I\epsilon\mathcal{T}_{B}^{-1}\dot{X}\mathcal{T}_{B}w
      + \I\epsilon \mathcal{T}_{B}^{-1}[[\dot{Q},Q],X]\mathcal{T}_{B}w\\
      &= \I\epsilon\partial_t{(\mathcal{T}_{B}^{-1}X\varphi)}
      + \I\epsilon\mathcal{T}_{B}^{-1}X[H,X]\varphi
      - \I\epsilon\mathcal{T}_{B}^{-1}\dot{X}\varphi
      + \I\epsilon \mathcal{T}_{B}^{-1}[[\dot{Q},Q],X]\varphi{.}
    \end{split}
  \end{equation}
  In the last equation we use again Eq.~\eqref{eqn:defDw}. 
  Substitute Eq.~\eqref{eqn:AdiabaticDw2} back to Eq.~\eqref{eqn:AdiabaticProofest}, 
  we get
  \begin{equation}\label{eqn:AdiaProof2}
    \begin{split}
      \|\varphi(t)-\varphi_B(t)\|_2 &= \|\int_0^t \dot{w}(s) ds\|_2\\
      &\leq \epsilon \|(\mathcal{T}_{B}^{-1}X\varphi)(t) - (\mathcal{T}_{B}^{-1}X\varphi)(0)\|_2 \\
      &\quad + \epsilon\left\|\int_{0}^t \big(
      \mathcal{T}_{B}^{-1}X[H,X]\varphi -
      \mathcal{T}_{B}^{-1}\dot{X}\varphi +
      \mathcal{T}_{B}^{-1}\big[[\dot{Q},Q],X\big]\varphi\big) ds \right\|_2\\
      &= \Or(\epsilon) {.}
    \end{split}
  \end{equation}
  Therefore there exists $\eta(t)$ such that
  \begin{equation}\label{eqn:AdiabaticIm}
    \varphi(t) = \varphi_B(t) + \epsilon \eta(t) {,}
  \end{equation}
  where $\|\eta(t)\|_2$ is bounded independently of
  $\epsilon$.  The differentiability of $\eta(t)$ follows
  directly from that of $\varphi(t)$ and $\varphi_{B}(t)$.

  3. Comparing Eq. \eqref{eqn:AdiabaticIm} with our goal, 
  the only thing that we need to prove is that the distance between
  $\varphi_B$ and $\varphi_A$ is also $\Or(\epsilon)$. 
Note that $\varphi_{A}$ can be written as~\cite{FetterWalecka2003}
  \begin{equation}
    \varphi_{A}(t) = \mathfrak{T}\left[\exp\left( \int_{0}^{t}
    [\dot{Q}(s),Q(s)] ds  \right)\right] \varphi_{A}(0),
    \label{}
  \end{equation}
  where $\mathfrak{T}$ is the time ordering operator due to the explicit
  time dependence of $Q$. 
  Using the power series representation, the time-ordered exponential
  is defined as
  \begin{equation}
    \mathfrak{T} \Bigl[ e^{\int_0^t A(s) \ud s} \Bigr] = I + \int_0^t A(s) \ud s
    + \frac{1}{2!}  \int_0^t \int_0^t \mathfrak{T} [A(s_1) A(s_2)] \ud s_1 \ud s_2 + \cdots,
  \end{equation}
  where the time-ordered product of two matrices $\mathfrak{T}[A(s_1) A(s_2)]$
  is given by
  \begin{equation}
    \mathfrak{T}[A(s_1) A(s_2)] = 
    \begin{cases}
      A(s_1) A(s_2), & s_1 \ge s_2; \\
      A(s_2) A(s_1), & s_1 < s_2.
    \end{cases}
  \end{equation}

  Using Duhamel's principle, we have from Eq. \eqref{eqn:AdiabaticWave2} and
  \eqref{eqn:AdiabaticWave}
  \begin{equation}\label{eqn:AGlemma}
    \varphi_B(t) = \varphi_A(t)
    + \int_0^t
    \mathfrak{T}\left[\exp\left( \int_{s}^{t}
    [\dot{Q}(s'),Q(s')]ds'\right)\right]
    \cdot \frac{1}{\I\epsilon}(H(s)-\varphi^*(s)H(s)\varphi(s))\varphi_B(s)ds
  \end{equation}
  By Eq.~\eqref{eqn:AdiabaticvarphiB}, \eqref{eqn:AdiabaticIm}, and the
  normalization condition of $\varphi$ and $\varphi_B$, 
  \begin{equation}
    \begin{split}
      (H - \varphi^* H \varphi)\varphi_B
      &= -\lambda(\epsilon\eta^*\varphi_B+\epsilon\varphi_B^*\eta)\varphi_B - \epsilon^2(\eta^*H\eta)\varphi_B \\
      &= -\lambda[(\varphi_B+\epsilon\eta)^*(\varphi_B+\epsilon\eta)-\varphi_B^*\varphi_B - \epsilon^2\eta^*\eta]\varphi_B 
      - \epsilon^2(\eta^*H\eta)\varphi_B \\
      &= \epsilon^2\lambda(\eta^*\eta)\varphi_B - \epsilon^2(\eta^*H\eta)\varphi_B \\
      &= \Or(\epsilon^2) {.}
    \end{split}
  \end{equation}
  Hence Eq.~\eqref{eqn:AGlemma} implies 
  \begin{equation}
    \varphi_B - \varphi_A = \Or(\epsilon) {.}
  \end{equation}
  Therefore, $\varphi_R := \eta + (\varphi_B-\varphi_A)/\epsilon$ is
  infinitely differentiable,
  and $\|\varphi_R(t)\|_2$ is bounded independently of $\epsilon$. This
  proves the decomposition of the solution to the PT dynamics
  \begin{equation}
    \varphi = \varphi_B + \epsilon\eta 
    = \varphi_B + \epsilon\varphi_R - (\varphi_B-\varphi_A)
    =\varphi_A + \epsilon \varphi_R {.}
  \end{equation}
\end{proof}

Theorem~\ref{thm:adiabatic1} gives a decomposition near the adiabatic regime 
with respect to the PT wave function. 
As a corollary, we also have the adiabatic theorem with respect to the
projector. 
\begin{cor}\label{cor:adiabaticP}
    For the projector $P(t)$, there exists an infinitely differentiable 
    matrix-valued function $R(t)$ such that 
    \begin{equation}
       P(t) = Q(t) + \epsilon R(t)
    \end{equation}
    holds for all $t$ up to $T = \Or(1)$, 
    where $\|R(t)\|_2$ is bounded independently of $\epsilon$. 
\end{cor}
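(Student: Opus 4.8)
The plan is to reduce everything to the decomposition already established in Theorem~\ref{thm:adiabatic1} together with the observation that both $P(t)$ and $Q(t)$ are rank-one projectors expressible as outer products. First I would record that, by assumption~2, $\lambda(t)$ is a \emph{simple} eigenvalue of $H(t)$, so the spectral projector $Q(t)$ has rank one; since $\varphi_A(t)$ is a unit vector with $Q(t)\varphi_A(t)=\varphi_A(t)$ (established just before the adiabatic theorem via the same argument as Eq.~\eqref{eqn:Pphi1}), this forces $Q(t)=\varphi_A(t)\varphi_A^*(t)$ for all $t\in[0,T]$. Likewise $P(t)=\varphi(t)\varphi^*(t)$ by Proposition~\ref{prop:ptrelation}.

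Next I would substitute the decomposition $\varphi(t)=\varphi_A(t)+\epsilon\varphi_R(t)$ from Theorem~\ref{thm:adiabatic1} into $P(t)-Q(t)=\varphi\varphi^*-\varphi_A\varphi_A^*$ and expand, giving
\begin{equation*}
  P(t)-Q(t) = \epsilon\bigl(\varphi_R(t)\varphi_A^*(t)+\varphi_A(t)\varphi_R^*(t)\bigr) + \epsilon^2\,\varphi_R(t)\varphi_R^*(t).
\end{equation*}
Hence the claimed identity holds with
\begin{equation*}
  R(t) := \varphi_R(t)\varphi_A^*(t)+\varphi_A(t)\varphi_R^*(t) + \epsilon\,\varphi_R(t)\varphi_R^*(t).
\end{equation*}
It remains to check the two properties of $R(t)$: infinite differentiability follows because $\varphi_A$ is smooth (it solves the linear ODE~\eqref{eqn:AdiabaticWave2} with smooth coefficients, using Lemma~\ref{LemBoundQ}) and $\varphi_R$ is infinitely differentiable by Theorem~\ref{thm:adiabatic1}; and $\|R(t)\|_2$ is bounded independently of $\epsilon$ because $\|\varphi_A(t)\|_2=1$, $\|\varphi_R(t)\|_2$ is bounded independently of $\epsilon$ by the theorem, and $0<\epsilon\ll 1$.

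Since the whole argument is a one-line substitution once the outer-product representations of $P$ and $Q$ are in hand, there is essentially no serious obstacle; the only point requiring a moment's care is justifying $Q(t)=\varphi_A(t)\varphi_A^*(t)$, which is where the simplicity of $\lambda(t)$ in assumption~2 is used. I would state this explicitly rather than leave it implicit, since without rank-one-ness of $Q$ the identity $P-Q=O(\epsilon)$ would not follow from the wavefunction-level estimate alone.
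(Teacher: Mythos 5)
Your proof is correct and follows essentially the same route as the paper: substitute the decomposition $\varphi=\varphi_A+\epsilon\varphi_R$ from Theorem~\ref{thm:adiabatic1} into $P=\varphi\varphi^*$ and identify $\varphi_A\varphi_A^*$ with $Q$, obtaining $R=\varphi_R\varphi_A^*+\varphi_A\varphi_R^*+\epsilon\varphi_R\varphi_R^*$. The only difference is cosmetic: you spell out the rank-one identification $Q=\varphi_A\varphi_A^*$ (via simplicity of $\lambda$ and $Q\varphi_A=\varphi_A$), which the paper leaves implicit, relying on the earlier remark that the argument of Proposition~\ref{prop:ptrelation} applies to $\varphi_A$.
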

\begin{proof}
    This follows directly from theorem~\ref{thm:adiabatic1}
    \begin{equation}
        \begin{split}
            P &= \varphi \varphi^*
            = (\varphi_A+\epsilon \varphi_R)(\varphi_A+\epsilon \varphi_R)^*\\
            &= Q + \epsilon(\varphi_R\varphi_A^* + \varphi_A\varphi_R^* + \epsilon\varphi_R\varphi_R^*){.}
        \end{split}
    \end{equation}
\end{proof}

\begin{rem}
The adiabatic theorem for the Schr\"odinger wave function $\psi(t)$ 
has been well established in the
literature \eg~\cite{Kato1950,AvronElgart1999,Teufel2003}, 
where the decomposition takes the form
$\psi = \psi_A + \epsilon \tilde{\psi}_R$, 
and the adiabatic
evolution $\psi_A$ satisfies 
\begin{equation}\label{eqn:PTeigen_def_standand}
    \I \epsilon \partial_t\psi_A = (H+\I\epsilon[\dot{Q},Q])\psi_A.
\end{equation}
\REV{We compare our result with previous well-established ones from two aspects. First, there is an important difference between the PT eigenfunction $\varphi_A$, governed by Eq.~\eqref{eqn:AdiabaticWave2}, and the standard one $\psi_A$, governed by Eq.~\eqref{eqn:PTeigen_def_standand}. Although both $\varphi_A$ and $\psi_A$ are  eigenfunctions of $H(t)$, their phase factors are different, resulting in different oscillatory behavior. More specifically, the standard  wavefunction $\psi_A$ oscillates on the scale of $\Or(\epsilon^{-1})$ since (at least intuitively) Eq.~\eqref{eqn:PTeigen_def_standand} is just a small perturbation of the original Schr\"odinger dynamics. The PT eigenfunction $\varphi_A$ does not 
depend on $\epsilon$, and thus oscillates on the scale of $\Or(1)$. 
When projected to the eigenspace, the PT dynamics leads to the optimal phase factor, and this verifies the effectiveness of the definition of PT (to minimize unnecessary oscillations) and provides another theoretical explanation of the performance shown in Fig.~\ref{fig:example_NLS_wave:a}. 
Second, our proof largely follows the existing works of the adiabatic theorem~\cite{Kato1950,AvronElgart1999,Teufel2003}. Our main modification is to address the special non-linear term in the PT dynamics, even though the original Schr\"odinger dynamics is linear.}

\end{rem}


\begin{rem}
As mentioned at the end of step 1, 
$\varphi_B$ is also an eigenstate,
and Eq.~\eqref{eqn:AdiabaticIm} indeed leads to another version of
the adiabatic theorem, but with notable differences from the
decomposition in Theorem~\ref{thm:adiabatic1}.
First, the definition of $\varphi_B$
still relies on the information of $\varphi$, and thus is not a
self-contained equation. 
Second, the norms of the derivatives of $\varphi_B$ still depend on
$\epsilon$ (more precisely one can prove that
$\norm{\varphi_B^{(k)}}_{2} \sim \Or(1/\epsilon^{k-2})$ for $k \geq 3$), which indicates that the gauge
choice of $\varphi_B$ is not optimal either.
\end{rem}

\subsection{Local truncation error}


In this section, we show that after time discretization, the local
truncation error of the discretized PT dynamics improves by one order in
terms of $\epsilon$ compared to that of the discretized Schr\"odinger
dynamics in the near adiabatic regime.  This is given in
Lemma~\ref{lem:LTE}.

\REV{For simplicity we will focus on the numerical integrators in the classes of  Runge-Kutta methods and linear multistep methods, both of which are widely used for simulating the Schr\"odinger equation. We will refer numerical integrator to either a Runge-Kutta method or a linear multistep method in our context.}
Recall that a numerical integrator with a given time step $h$, denoted by $I_{h}$, can be generally written as
\begin{equation}
  u_{n+1} = I_h(u_{n},\cdots,u_{n-l}),
\end{equation}
for some integer $l\ge 0$, and $u_{n}$ is the numerical approximation to
the true solution $u(t_{n})$.
If $I_h$ is of order $k$, then the local truncation error at step $n+1$,
defined as 
\[
L_{n+1} = I_h(u(t_n),\cdots,u(t_{n-l})) - u_{n+1},
\]
should satisfy
\[
\norm{L_{n+1}}_{2} \le  Ch^{k+1} \norm{u^{(k+1)}(\xi_{n+1})}_{2}, 
\]
for some $\xi_{n+1}\in [t_{n},t_{n+1}]$.
When applied to the Schr\"odinger dynamics, the PT dynamics, or
the associated Hamiltonian form, we may identify $u$ with $\psi$, 
$\varphi$, or the equivalent $(q,p)$ representation.

\begin{lem}\label{lem:LTE}
  Apply a numerical integrator of order $k$ 
  to the Schr\"odinger dynamics or its Hamiltonian
  form~\eqref{eqn:LSEHamiltonian}. Then the local truncation error is
  bounded by $C h^{k+1}/\epsilon^r$ up to the time $T\sim \Or(1)$,
  with $r = k+1$ and $C$ is a constant independent of $h$ and $\epsilon$.  The same result holds for the PT dynamics \eqref{eqn:PTLSEsingle} 
  or its corresponding Hamiltonian form \eqref{eqn:PTHamiltonianFor} with
  $r=k$.
\end{lem}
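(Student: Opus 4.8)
The plan is to establish both bounds by estimating $\|u^{(k+1)}(t)\|_2$ in terms of $\epsilon$ and then invoking the standard local truncation error formula $\|L_{n+1}\|_2 \le Ch^{k+1}\|u^{(k+1)}(\xi_{n+1})\|_2$ recalled just before the lemma. For the Schr\"odinger dynamics, I would differentiate $\I\epsilon\dot{\psi} = H\psi$ repeatedly: each time derivative either hits $\psi$ (producing another factor $\frac{1}{\I\epsilon}H$) or hits $H$ (producing a bounded $H^{(j)}$ by assumption 1, plus lower-order terms). A straightforward induction shows $\|\psi^{(m)}(t)\|_2 \le C_m/\epsilon^m$ for all $m$, so taking $m = k+1$ gives $r = k+1$. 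The $(q,p)$ Hamiltonian form~\eqref{eqn:LSEHamiltonian} satisfies the same Schr\"odinger dynamics (plus the trivial equations $\dot{\tau}=1$ and the equation for $E$, which inherit the same $\epsilon$-scaling since $\partial_t E$ involves $q,p$ quadratically), so the bound carries over.

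For the PT dynamics the key input is the adiabatic decomposition from Theorem~\ref{thm:adiabatic1}: $\varphi(t) = \varphi_A(t) + \epsilon\varphi_R(t)$, where $\varphi_A$ solves~\eqref{eqn:AdiabaticWave2} with the $\epsilon$-\emph{independent} generator $\I\epsilon[\dot{Q},Q]$ (so that $\I\epsilon$ cancels and $\varphi_A$ has all derivatives bounded independently of $\epsilon$, using Lemma~\ref{LemBoundQ}), and $\varphi_R$ has $\|\varphi_R\|_2$ bounded independently of $\epsilon$. I would then differentiate $\varphi$ exactly $k+1$ times. The $\varphi_A$ part contributes an $\Or(1)$ term. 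For the $\epsilon\varphi_R$ part, I need $\|\varphi_R^{(k)}(t)\|_2 \le C/\epsilon^{k-1}$; this follows by differentiating the equation that $\varphi_R = (\varphi - \varphi_A)/\epsilon$ inherits from~\eqref{eqn:PTLSEsingle} and~\eqref{eqn:AdiabaticWave2} — the right-hand side has the schematic form $\frac{1}{\I\epsilon}(H^e\varphi) - [\dot{Q},Q]\varphi_A$, and each further differentiation of the first term costs at most one power of $\epsilon^{-1}$ while the second term stays $\Or(1)$ (a small induction, analogous to the Schr\"odinger estimate but starting from the $\Or(1)$ base case $m \le 2$ for $\varphi_R$). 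Hence $\|\varphi^{(k+1)}(t)\|_2 \le \Or(1) + \epsilon\cdot\Or(\epsilon^{-k}) = \Or(\epsilon^{1-k})$, wait — more carefully, $\varphi^{(k+1)} = \varphi_A^{(k+1)} + \epsilon\varphi_R^{(k+1)}$ with $\|\varphi_R^{(k+1)}\|_2 = \Or(\epsilon^{-k})$, giving $\|\varphi^{(k+1)}\|_2 = \Or(\epsilon^{1-k}) = \Or(\epsilon^{-(k-1)})$, so $\|L_{n+1}\|_2 \le C h^{k+1}\epsilon^{-(k-1)}$. This is actually \emph{better} than the claimed $r=k$; I would state the result as $r=k$ as the clean, safe bound and note it may not be tight, or recheck whether the paper intends $k \ge 1$ with an off-by-one in the indexing of derivatives versus the Hamiltonian-form variables. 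For the Hamiltonian form~\eqref{eqn:PTHamiltonianFor}, I would use that on the invariant manifold $\varphi^*\varphi = 1$ it coincides with the PT dynamics (Proposition~\ref{prop:PTHamiltonian}), so the same derivative bounds apply to $(q,p)$.

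The main obstacle is controlling the derivatives of $\varphi_R$: Theorem~\ref{thm:adiabatic1} only asserts that $\|\varphi_R(t)\|_2$ itself is bounded independently of $\epsilon$, not its higher derivatives, so I must recover the inductive estimate $\|\varphi_R^{(m)}\|_2 = \Or(\epsilon^{-(m-2)})$ for $m \ge 2$ by going back to the defining equations. The delicate point is that the nonlinear term $\varphi^*H\varphi$ in~\eqref{eqn:PTLSEsingle}, when differentiated, produces terms involving $\dot{\varphi}$ which carry their own $\epsilon^{-1}$; I need to check that the \emph{cancellation} built into the PT gauge (the fact that $(H - \varphi^*H\varphi)\varphi_B = \Or(\epsilon^2)$, established in step 3 of the proof of Theorem~\ref{thm:adiabatic1}) persists in a suitable form under differentiation, or else argue more crudely that even without cancellation the worst case only loses one power of $\epsilon$ per derivative, which is exactly what the $r=k$ (rather than $r=k+1$) bound encodes. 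I would also need to verify that all these estimates hold uniformly on $[0,T]$ with $T = \Or(1)$, which is immediate since Theorem~\ref{thm:adiabatic1} and Lemma~\ref{LemBoundQ} already provide uniform-in-$[0,T]$ control of the quantities involved.
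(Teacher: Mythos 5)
Your Schr\"odinger half is essentially the paper's argument (induction on derivatives of $\I\epsilon\dot\psi=H\psi$, giving $\|\psi^{(k+1)}\|_2=\Or(\epsilon^{-(k+1)})$), and that part is fine. The PT half, however, contains a genuine error in the central quantitative claim. You assert that the remainder in the decomposition $\varphi=\varphi_A+\epsilon\varphi_R$ satisfies $\|\varphi_R^{(k+1)}\|_2=\Or(\epsilon^{-k})$ (and elsewhere $\|\varphi_R^{(m)}\|_2=\Or(\epsilon^{-(m-2)})$ with an ``$\Or(1)$ base case for $m\le 2$''), which would yield $\|\varphi^{(k+1)}\|_2=\Or(\epsilon^{-(k-1)})$, i.e.\ a bound one order better than the lemma. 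This is false: Theorem~\ref{thm:adiabatic1} controls only $\|\varphi_R\|_2$, and its derivatives genuinely blow up at the rate $\|\varphi_R^{(m)}\|_2=\Or(\epsilon^{-m})$. The reason is that the $\Or(\epsilon)$ residual component of $\varphi$ lying in the complementary eigenspaces carries the relative dynamical phase, which oscillates at frequency $\Or(1/\epsilon)$; the PT gauge removes the phase of the occupied eigencomponent but cannot remove this relative phase (already for $m=1$, $\dot\varphi-\dot\varphi_A=\frac{1}{\I}[H,R]\varphi-[\dot Q,Q]\varphi_A$ is generically $\Or(1)$, so $\dot\varphi_R=\Or(\epsilon^{-1})$, not $\Or(1)$). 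With the correct scaling one gets exactly $\|\varphi^{(k+1)}\|_2=\Or(1)+\epsilon\,\Or(\epsilon^{-(k+1)})=\Or(\epsilon^{-k})$, i.e.\ $r=k$ and no improvement; the paper's numerics (the $\Or(h^2/\epsilon^2)$ scaling of PT-Ham-GL2) indicate this is sharp, so your suspicion of an off-by-one in the statement should be dropped.

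Your ``crude fallback'' (each derivative of $\varphi_R$ costs at most one power of $\epsilon^{-1}$) does lead to $r=k$, but as written it is not carried out, and executing it requires exactly the delicate bookkeeping of the nonlinear term $\partial_t^{j}(\varphi^*H\varphi)$ that you flag as the obstacle. The paper takes a different and cleaner route that avoids this entirely: it works at the level of the density matrix. Corollary~\ref{cor:adiabaticP} gives $P=Q+\epsilon R$, so $\dot P=\frac{1}{\I\epsilon}[H,P]=\frac{1}{\I}[H,R]=\Or(1)$ (only the \emph{first} derivative needs adiabatic input), higher derivatives $\|P^{(m+1)}\|_2=\Or(\epsilon^{-m})$ follow by differentiating the von Neumann equation, and the bounds transfer to $\varphi$ through the PT condition $\dot\varphi=\dot P\varphi$ and the Leibniz rule, $\varphi^{(k+1)}=\sum_j\binom{k}{j}P^{(j+1)}\varphi^{(k-j)}$. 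If you want to stay with the wave-function decomposition, the workable version is: establish the base case $\dot\varphi=\frac{1}{\I\epsilon}(H-\varphi^*H\varphi)\varphi=\Or(1)$ from $\varphi=\varphi_A+\epsilon\varphi_R$, then induct directly on the PT equation with the correct hypothesis $\|\varphi^{(m)}\|_2=\Or(\epsilon^{-(m-1)})$, never claiming better-than-$\Or(\epsilon^{-m})$ control of $\varphi_R^{(m)}$.
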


\begin{proof}
   It is sufficient to show that the derivatives satisfy
  $\norm{\psi^{(k+1)}}_{2}\leq \Or(1/\epsilon^{k+1})$, and
  $\norm{\varphi^{(k+1)}}_{2} \leq \Or(1/\epsilon^k)$ for any $k\ge 0$.
  This can be proved by induction.
 

  1. For $\psi$, 
  the case $k = 0$ directly follows from Eq.~\eqref{eqn:SEgeneral}. 
  Assume the estimate holds for all the integers smaller than $k$,
  differentiate the Schr\"odinger equation $k$ times and we get
  \begin{equation}\label{pf:d2}
    \psi^{(k+1)} = \frac{1}{\I \epsilon}
    \sum_{j=0}^k\binom{k}{j} H^{(k-j)}\psi^{(j)} {.}
  \end{equation}
   By the induction and the assumption 1, 
  \begin{equation}
    \|\psi^{(k+1)}\|_2 \leq \frac{C}{\epsilon}\sum_{j=0}^k\binom{k}{j} \frac{1}{\epsilon^j}
    \sim \Or(\epsilon^{-(k+1)}).
  \end{equation}

  2. For $\varphi$,
  we first study the derivatives of $P$,
  and then use the PT condition~\eqref{eqn:PTcondition} 
  to obtain the derivatives of $\varphi$.

  By Corollary~\ref{cor:adiabaticP}, the von Neumann
  equation~\eqref{eqn:vonNeumann} and the identity $HQ = QH$,
  the first order derivative of $P$ satisfies
  $$\|\dot{P}\|_2 = \frac{1}{\epsilon}\|HP-PH\|_2 = \|HR-RH\|_2 \leq
  \Or(1){.}$$
  Furthermore, differentiate the von Neumann
  equation~\eqref{eqn:vonNeumann} $k$ times, we get
  \begin{equation}
    P^{(k+1)} = \frac{1}{\I\epsilon}\sum_{j=0}^k\binom{k}{j}[H^{(j)},P^{(k-j)}] {,}
  \end{equation}
  from which we can show by induction that
  \begin{equation}\label{eqn:orderP}
    \norm{P^{(k+1)}}_{2} \leq \Or(\epsilon^{-k}) {.}
  \end{equation}

  Now use the PT condition $P\dot{\varphi} = 0$, we find for $k=0$,
  \begin{equation}
    \dot{\varphi} = \partial_{t}(P\varphi) = \dot{P}\varphi \leq \Or(1) {.}
  \end{equation}
  Furthermore,
  \begin{equation}
    \varphi^{(k+1)} = \sum_{j=0}^k\binom{k}{j} P^{(j+1)}\varphi^{(k-j)} {,}
  \end{equation}
  from which we can prove by induction and Eq.~\eqref{eqn:orderP} that
  \begin{equation}
    \varphi^{(k+1)} \leq \Or(\epsilon^{-k}) {.}
  \end{equation}
\end{proof}


\subsection{Global error}


The analysis of the local truncation error directly extends to the
global error up to $T\sim \Or(\epsilon)$, following the classical stability
analysis.  However, the
Lipschitz constants corresponding to the right hand side of the
Schr\"odinger dynamics and the PT dynamics are generally
$\Or(1/\epsilon)$, which leads to an exponentially growing factor
$\exp(T/\epsilon)$ in the global error bounds. Hence we cannot directly
obtain the global error estimate up to $\Or(1)$ time.

However, if we adopt the Hamiltonian formulation of the dynamics and
employ a symplectic integrator, we can indeed obtain long time error
estimates. This is stated in Theorem~\ref{thm:GE}, of which the proof 
directly follows from Lemma~\ref{lem:LTE} and Theorem X.3.1 in \cite{HairerLubichWanner2006}.
\begin{thm}\label{thm:GE}
    Apply a symplectic integrator of order $k$ to the
    Hamiltonian system \eqref{eqn:LSEHamiltonian} and
    \eqref{eqn:PTHamiltonianFor}, then there exist constants $c,C$, 
    independent of $h$ and $\epsilon$, such
    that for the time step $h \leq c\epsilon$, 
    the numerical solutions up to the time $T \sim \Or(1)$ satisfy
    \begin{equation}
      \|(q_n,p_n) - (q(t),p(t))\|_{2} \leq C\frac{h^k}{\epsilon^r}.
    \end{equation}
    Here $r = k + 1$ for the Schr\"odinger dynamics \eqref{eqn:LSEHamiltonian}
    and $r = k$ for the PT dynamics \eqref{eqn:PTHamiltonianFor}.
\end{thm}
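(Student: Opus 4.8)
The plan is to combine the local truncation error estimate from Lemma~\ref{lem:LTE} with the standard long-time error analysis of symplectic integrators applied to the Hamiltonian systems~\eqref{eqn:LSEHamiltonian} and~\eqref{eqn:PTHamiltonianFor}. The reference point is Theorem~X.3.1 in~\cite{HairerLubichWanner2006}, which controls the global error of a symplectic method applied to a (possibly time-dependent, via the $(\tau,E)$ augmentation) Hamiltonian system over time intervals on which the exact and numerical trajectories remain in a fixed compact set, provided the step size is small relative to the relevant derivative bounds. The key issue is that the Hamiltonian vector fields here depend on $\epsilon$, so all the constants in a naive application of the backward-error-analysis result would blow up with $\epsilon$; the point of the theorem is that the $\epsilon$-dependence can be tracked explicitly and is governed precisely by the growth of the high-order time derivatives of the exact solution, which is exactly what Lemma~\ref{lem:LTE} quantifies.

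First I would recall the setup: after the canonical augmentation, both~\eqref{eqn:LSEHamiltonian} and~\eqref{eqn:PTHamiltonianFor} are autonomous Hamiltonian systems on a phase space containing the variables $(\tau,q,E,p)$, with smooth (polynomial in $q,p$, smooth in $\tau$) Hamiltonians. The conserved quantities $q^Tq+p^Tp=1$ (for the PT system, by Proposition~\ref{prop:PTHamiltonian}) and $\|\psi\|_2=1$ (for the Schr\"odinger system) confine the exact flow, and hence, for $h$ small enough, the numerical flow as well, to a bounded region; on this region the Hamiltonian and all its derivatives are bounded uniformly in $\epsilon$ after rescaling by the explicit factor $1/\epsilon$ in the energy functionals. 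Next I would invoke the local error bound: Lemma~\ref{lem:LTE} gives $\|L_{n+1}\|_2\le C h^{k+1}/\epsilon^{r}$ with $r=k+1$ in the Schr\"odinger case and $r=k$ in the PT case, valid up to $T\sim\Or(1)$, because the bounds $\|\psi^{(k+1)}\|_2\le\Or(\epsilon^{-(k+1)})$ and $\|\varphi^{(k+1)}\|_2\le\Or(\epsilon^{-k})$ hold on that whole interval. Then I would feed these into the symplectic global-error mechanism: the standard argument (Lady Windermere's fan combined with the backward error analysis / long-time-stability estimate of~\cite{HairerLubichWanner2006}) shows that, for symplectic methods, the error accumulation is \emph{not} the naive $\exp(T/\epsilon)$ but instead grows essentially linearly in the number of steps $n\sim T/h$, so that the global error is bounded by roughly $n$ times the local error divided by $h$, i.e. of the same order $h^k/\epsilon^{r}$ as a single local step divided by $h$, uniformly up to $T\sim\Or(1)$ as long as $h\le c\epsilon$. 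The threshold $h\le c\epsilon$ is exactly the condition needed so that the relevant step-size-times-Lipschitz quantity is bounded and the backward-error Hamiltonian is well-defined, and it is also the regime in which the derivative bounds of Lemma~\ref{lem:LTE} are the operative ones.

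The main obstacle, and the place where care is genuinely needed, is verifying that the hypotheses of Theorem~X.3.1 of~\cite{HairerLubichWanner2006} are met \emph{with explicit tracking of the $\epsilon$-dependence}: one must confirm that the compact invariant region is independent of $\epsilon$, that the numerical solution does not escape it for $h\le c\epsilon$, and — most delicately — that the constants produced by the backward error analysis depend on $\epsilon$ only through the $1/\epsilon$ prefactor in the Hamiltonian (equivalently, through the derivative bounds already established), rather than acquiring extra hidden powers of $1/\epsilon$ from, say, the size of the analyticity domain or the number of iterations in the fixed-point solve for the implicit method. Once this bookkeeping is in place, the statement follows by simply substituting $r=k+1$ (Schr\"odinger) or $r=k$ (PT) from Lemma~\ref{lem:LTE} into the symplectic global-error estimate, and the one-order-in-$\epsilon$ advantage of the PT dynamics is inherited directly from the corresponding advantage in the local truncation error. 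I would therefore present the proof as: (i) the Hamiltonian systems live on a fixed compact set by the norm-conservation laws; (ii) Lemma~\ref{lem:LTE} supplies the $\epsilon$-explicit local error; (iii) Theorem~X.3.1 of~\cite{HairerLubichWanner2006} converts this into the claimed global bound for $h\le c\epsilon$ up to $T\sim\Or(1)$, with the stated values of $r$.
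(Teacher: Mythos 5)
Your proposal follows essentially the same route as the paper: it combines the $\epsilon$-explicit local truncation error of Lemma~\ref{lem:LTE} with Theorem~X.3.1 of \cite{HairerLubichWanner2006}, using the linear (non-exponential) error accumulation of symplectic integrators and attributing the restriction $h\leq c\epsilon$ to the $\Or(1/\epsilon)$ Lipschitz constant, exactly as the paper's proof and accompanying remarks do. Your additional bookkeeping about compact invariant sets and tracking $\epsilon$-dependence in the constants is a more careful spelling-out of the same argument, not a different approach.
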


\begin{rem}
In Theorem X.3.1 in \cite{HairerLubichWanner2006}, 
all terms are bounded by $\Or(1)$
terms and there is no $\epsilon$ dependence.
In order to adapt its proof  to the current situation,
we observe the key fact in Theorem X.3.1 in \cite{HairerLubichWanner2006} that the global error of a symplectic integrator
accumulates linearly in time with no exponential growing factor. Therefore the local truncation error which is $\Or(h^{k+1}/\epsilon^r)$ 
directly sums up linearly to the global error of 
$\Or(h^{k}/\epsilon^r)$. 
\end{rem}

\begin{rem}
The nontrivial restriction on the time step size $h \leq c\epsilon$ is
because Theorem X.3.1 in \cite{HairerLubichWanner2006} holds only for
sufficiently small time steps.  In general, $h$ must be no larger
than $c/L$ where $L$ is the Lipschitz constant of the right hand side of
the Hamiltonian system, and is $\Or(1/\epsilon)$ in the singularly
perturbed regime. Nonetheless, numerical results in
Section~\ref{sec:numer} indicate that the PT dynamics may admit a
considerably larger time step in practice.  
\end{rem}

\begin{rem}
When a symplectic integrator is used, Theorem~\ref{thm:GE} is
directly applicable to the Schr\"odinger dynamics. However, the PT
dynamics~\eqref{eqn:PTLSEsingle} and the Hamiltonian
system~\eqref{eqn:PTHamiltonianFor} share the same exact solution, but
lead to different numerical schemes even when the same integrator is
used. Despite such difference, numerical results 
in Section~\ref{sec:numer} indicate that the
symplectic integrators, and even certain non-symplectic schemes, can still
perform very well in the PT dynamics~\eqref{eqn:PTLSEsingle}. 
\end{rem}


\begin{rem}
  Theorem~\ref{thm:GE} also indicates that the PT dynamics is relatively more effective
  when combined with low order methods.  For instance, if we
  would like to achieve some desired accuracy $\delta$ 
  (assuming $\delta$ is sufficiently small), 
  then for the Schr\"odinger dynamics, we should 
  choose the time step size to be 
  $$h \sim \Or(\delta^{\frac{1}{k}}\epsilon^{1+\frac{1}{k}}).$$ 
  For the PT dynamics, we should choose 
  $$h \sim \Or(\delta^{\frac{1}{k}}\epsilon).$$ 
  From this perspective, the gain of the PT dynamics is less significant
  when $k$ is large.
\end{rem}

\section{Numerical results}\label{sec:numer}

In this section we study the effectiveness of the PT dynamics 
using three examples. 
The first one is a toy example, which is a linear Schr\"odinger equation
in $\CC^{2}$.
This example gives a clear illustration of the performance
of different numerical methods near and beyond the adiabatic regime. 
The second example is a nonlinear Schr\"odinger equation in a
one-dimensional space, where we also compare the computational cost
between the propagation of the Schr\"odinger dynamics and the PT
dynamics.  In the end we study the electron dynamics of a benzene
molecule driven by an ultrashort laser pulse described by the
time-dependent density functional theory (TDDFT).


The test programs in the first two examples are written in MATLAB. We
implement the PT dynamics for TDDFT in the PWDFT code, which performs
planewave based electronic structure calculations. PWDFT is a
self-contained module in the massively parallel DGDFT (Discontinuous
Galerkin Density Functional Theory) software package written in
MPI and C++~\cite{LinLuYingE2012,HuLinYang2015a}. All calculations are
carried out using the BRC High Performance Computing service. Each
node consists of two Intel Xeon 10-core Ivy Bridge processors (20 cores
per node) and 64 gigabyte
(GB) of memory. We use the Anderson
mixing for solving all the nonlinear fixed point problems, 
including those in the PT dynamics, the nonlinear Schr\"odinger equation, 
and the TDDFT calculations. 
Here no preconditioner is used for the first two examples. We use a shifted
Laplace preconditioner for the TDDFT example, which can be implemented
efficiently in the planewave basis set using the fast Fourier transform. 

%

%

\subsection{A toy example}

First we present a linear example, 
in which $H(t)$ is chosen to be
\begin{equation}
    H(t) = \left(
                 \begin{array}{cc}
                   t-t_0 & \delta \\
                   \delta & -(t-t_0) \\
                 \end{array}
               \right) {.}
\end{equation}
Here $H(t)$ has the eigenvalues $\lambda_{1,2}(t) = \mp \sqrt{(t-t_0)^2+\delta^2}$, 
where $\delta>0$ ensures the gap condition and
controls the size of the gap. 
When $\delta$ is large, the dynamics stays closer to
the adiabatic regime, while the dynamics can go beyond the adiabatic
regime with a smaller $\delta$  
(see Fig.~\ref{fig:example_2by2_gap}). 
The initial value is always chosen to be 
the normalized eigenvector of $H(0)$ corresponding to 
$\lambda_1(0) = -\sqrt{t_0^2+\delta^2}$. 
We propagate the wave functions up to $T = 1$.
For the choices of the parameters in the Anderson Mixing 
in propagating PT dynamics, 
the step length $\alpha = 1$, the mixing dimension is 20, 
and the tolerance is $10^{-8}$. 

\begin{figure} \centering
    \begin{subfigure}[b]{0.48\textwidth}
        \includegraphics[width=\textwidth]{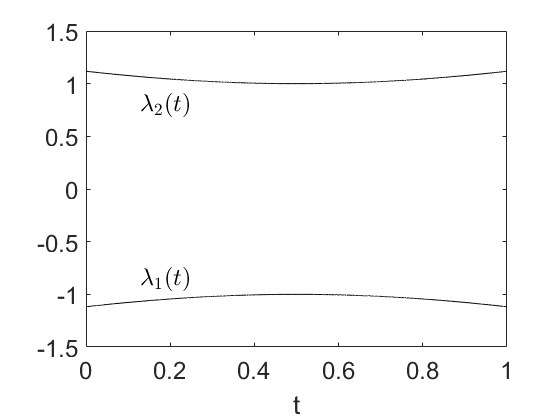}
        \caption{$\delta = 1$}
        \label{fig:example_2by2_gap:a}
    \end{subfigure} 
    \begin{subfigure}[b]{0.48\textwidth}    
        \includegraphics[width=\textwidth]{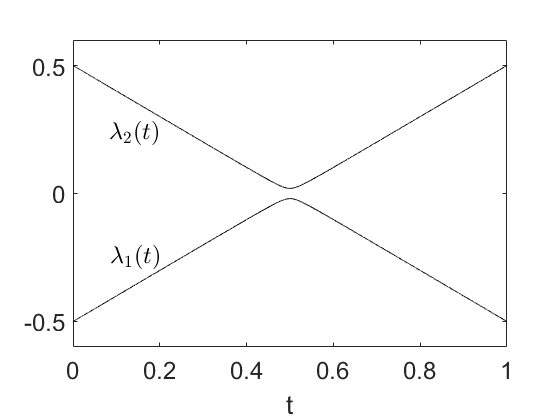}
        \caption{$\delta = 0.02$}
        \label{fig:example_2by2_gap:b}    
    \end{subfigure} 
    \caption{Eigenvalues of $H(t)$ in the toy example with $t_0 = 0.5$
    and two choices of $\delta$.
    }
    \label{fig:example_2by2_gap}
\end{figure}


\subsubsection{Near adiabatic regime}
First we consider the near adiabatic case with $\delta = 1$.  
We compare the following numerical methods: 
\begin{itemize}
  \item S-RK4: fourth order Runge-Kutta method (RK4) 
    applied to the Schr\"odinger equation~\eqref{eqn:SEgeneral}
  \item PT-RK4: fourth order Runge-Kutta method (RK4) 
  applied to the PT dynamics~\eqref{eqn:PTLSEsingle}
  \item S-GL2: implicit midpoint rule (GL2) applied to 
  the Schr\"odinger equation~\eqref{eqn:SEgeneral}
\item PT-Ham-GL2: implicit midpoint rule (GL2) applied to 
the PT Hamiltonian system~\eqref{eqn:PTHamiltonianFor}
  \item PT-GL2: implicit midpoint rule (GL2) applied to 
  the PT dynamics~\eqref{eqn:PTLSEsingle}
  \item PT-CN: trapezoidal rule (or the Crank-Nicolson method, CN) 
  applied to the PT dynamics~\eqref{eqn:PTLSEsingle}
\end{itemize}

\begin{figure} \centering
    \begin{subfigure}[b]{0.48\textwidth}
        \includegraphics[width=\textwidth]{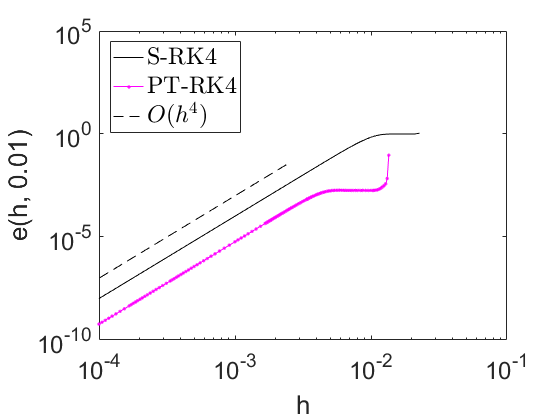}
        \caption{$\epsilon = 0.01$}
        \label{fig:example_2by2_errors:a}
    \end{subfigure} 
    \begin{subfigure}[b]{0.48\textwidth}    
        \includegraphics[width=\textwidth]{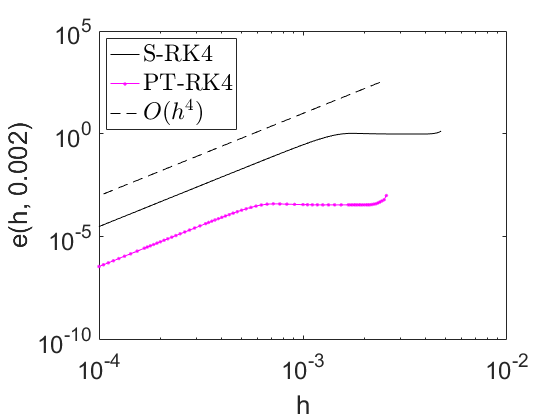}
        \caption{$\epsilon = 0.002$}
        \label{fig:example_2by2_errors:b}    
    \end{subfigure} 
    \begin{subfigure}[b]{0.48\textwidth}    
        \includegraphics[width=\textwidth]{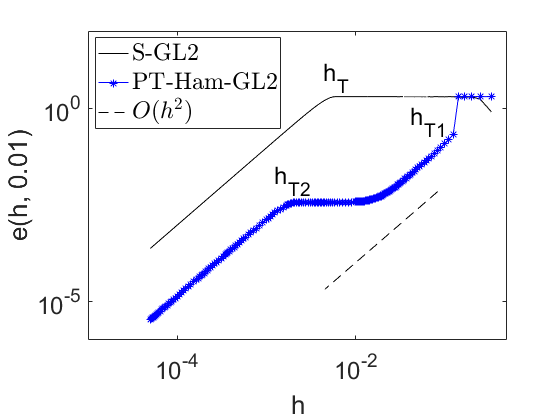}
        \caption{$\epsilon = 0.01$}
        \label{fig:example_2by2_errors:c}    
    \end{subfigure} 
    \begin{subfigure}[b]{0.48\textwidth}    
        \includegraphics[width=\textwidth]{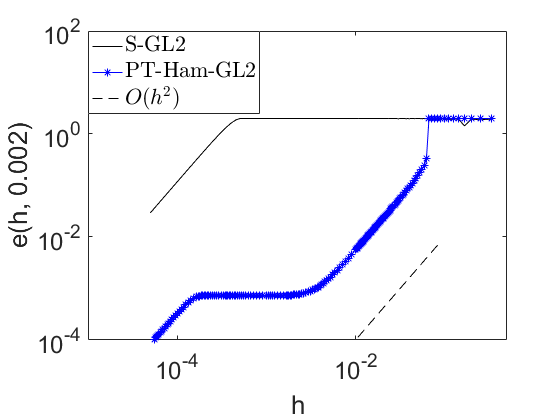}
        \caption{$\epsilon = 0.002$}
        \label{fig:example_2by2_errors:d}    
    \end{subfigure} 
    \caption{Numerical errors of different numerical methods 
    in the near adiabatic regime of the toy example. 
    (a)(b) compare S-RK4 and PT-RK4
    for $\epsilon = 0.01,0.002$, respectively. 
    (c)(d) compare S-GL2 and PT-Ham-GL2 
    for $\epsilon = 0.01,0.002$, respectively. 
    } 
    \label{fig:example_2by2_errors}
\end{figure}

Fig.~\ref{fig:example_2by2_errors} compares the performance
of different methods for this toy example.  
The numerical error is computed by  
$$\mathsf{e}(h,\epsilon) = \max_{n \text{ s.t. } nh \in [0,T]}\|u_n-u(t_n)\|_2$$
where $u$ denotes $\psi$ for the Schr\"odinger dynamics, 
$\varphi$ for the PT dynamics and $(q,p)$ for the Hamiltonian systems,
respectively. 

\begin{figure} \centering
    \begin{subfigure}[b]{0.48\textwidth}
        \includegraphics[width=\textwidth]{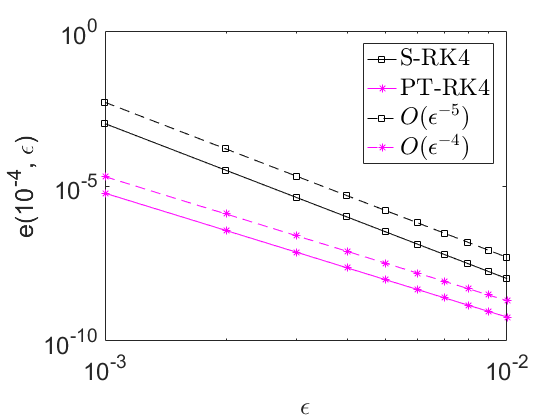}
        \caption{}
        \label{fig:example_2by2_order:a}
    \end{subfigure} 
    \begin{subfigure}[b]{0.48\textwidth}
        \includegraphics[width=\textwidth]{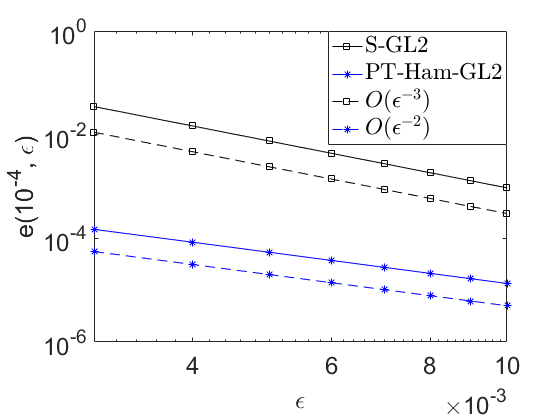}
        \caption{}
        \label{fig:example_2by2_order:b}
    \end{subfigure}
    \caption{Relationship between the asymptotic errors and $\epsilon$ 
    in the near adiabatic regime of the toy example. 
    Here we fix the time step size to be $h = 10^{-4}$ for both (a)(b).}
    \label{fig:example_2by2_order}
\end{figure}

We first consider the explicit numerical methods. 
Fig.~\ref{fig:example_2by2_errors:a} and~\ref{fig:example_2by2_errors:b} 
give a comparison between S-RK4 and PT-RK4. 
Not surprisingly, as an explicit method, RK4 is numerically unstable 
for large time steps under both cases, 
and achieves fourth order convergence for small time steps. 
Furthermore, when $h$ is small enough, 
$\mathsf{e}(h,\epsilon)$ of the PT dynamics 
is smaller than that of the Schr\"odinger dynamics. 
Fig.~\ref{fig:example_2by2_order:a} presents a study on 
how $\mathsf{e}(h,\epsilon)$ depends on $\epsilon$, 
which reveals that by propagating the PT dynamics 
we gain one extra order of accuracy in terms of $\epsilon$. 
This agrees with the theoretical results in Section~\ref{sec:analysis}. 

Next we test GL2 as an example of implicit symplectic methods applied to
the Hamiltonian systems. Fig.~\ref{fig:example_2by2_errors:c}
compares the numerical performances of S-GL2 and PT-Ham-GL2.  For small
$h$, we observe a smaller error using the PT formulation, \ie
$\mathsf{e}(h,\epsilon)$ of S-GL2 is $\Or(h^2/\epsilon^3)$ and
$\mathsf{e}(h,\epsilon)$ of PT-Ham-GL2 is $\Or(h^2/\epsilon^2)$ (see
Fig.~\ref{fig:example_2by2_order:b} for a study on the $\epsilon$
dependence).  This verifies the estimate in Theorem~\ref{thm:GE}. 
Despite that GL2 is a numerically stable scheme with a large time step,
the step size of S-GL2 is constrained by the requirement of the
accuracy, while the step size of PT-Ham-GL2 can be chosen to be 
considerably larger.


\begin{figure} \centering
    \begin{subfigure}[b]{0.48\textwidth}
        \includegraphics[width=\textwidth]{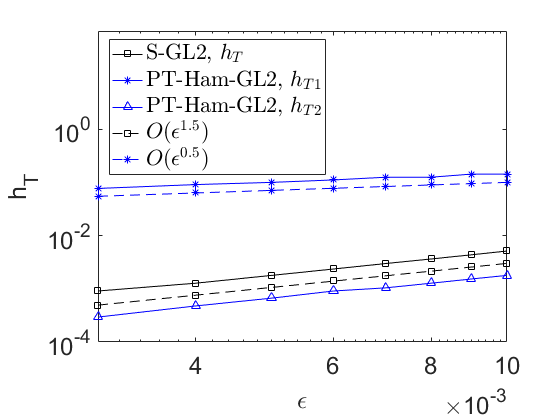}
        \caption{}
        \label{fig:example_2by2_TP:a}
    \end{subfigure} 
    \begin{subfigure}[b]{0.48\textwidth}
        \includegraphics[width=\textwidth]{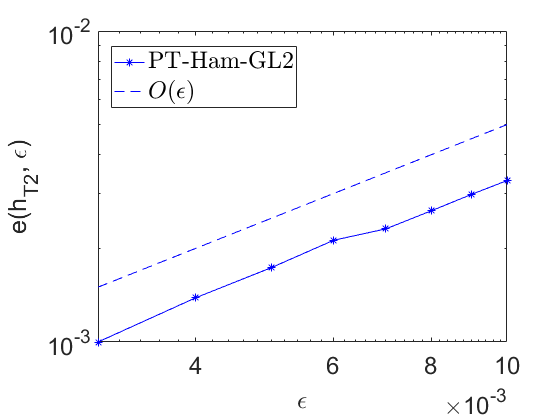}
        \caption{}
        \label{fig:example_2by2_TP:b}
    \end{subfigure}
    \caption{(a) Relationship between the turning points and $\epsilon$ 
    in S-GL2 and PT-Ham-GL2 in the near adiabatic regime 
    of the toy example. 
    (b) Relationship between the magnitude of the plateau of the
    numerical error and $\epsilon$ 
    in PT-Ham-GL2. }
    \label{fig:example_2by2_TP}
\end{figure}

More specifically, let us define the ``turning point'' $h_T$ to be 
the largest time step size when a scheme starts to converge.
Numerically for second order schemes the turning point can be computed as
$$h_T = \arg\max\left\{h\in[h_1,h_2]: \frac{\partial(\log \mathsf{e})}{\partial(\log h)} > 1\right\}$$
where $[h_1,h_2]$ is a suitable interval 
containing the convergence interval of interests. 
In Fig.~\ref{fig:example_2by2_errors:c} 
we mark the turning points in S-GL2 and PT-Ham-GL2,
and study their dependence on $\epsilon$ 
in Fig.~\ref{fig:example_2by2_TP:a}. 
For S-GL2, the convergence starts at $h_T = \Or(\epsilon^{3/2})$. 
For PT-Ham-GL2, a two-stage convergence behavior is observed.
As $h$ decreases, 
the scheme first starts to converge with second order
at a relatively large time step $h_{T1} = \Or(\epsilon^{1/2})$.
This first stage ends at $h = \Or(\epsilon)$ when
$\mathsf{e}(h,\epsilon)$ reaches a plateau with its magnitude being
$\Or(\epsilon)$ (see Fig.~\ref{fig:example_2by2_TP:b}).
Then the second-stage convergence starts at $h_{T2} =
\Or(\epsilon^{3/2})$. 


\begin{figure} \centering
    \begin{subfigure}[b]{0.48\textwidth}    
        \includegraphics[width=\textwidth]{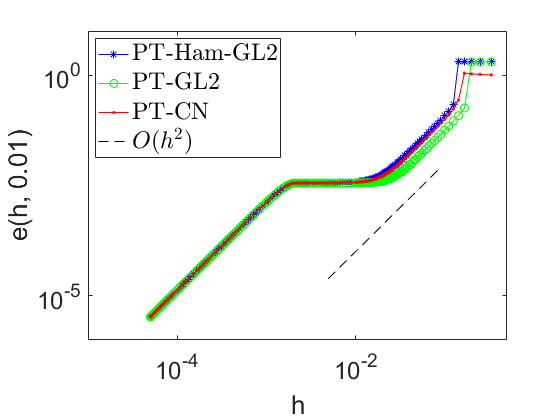}
        \caption{$\epsilon = 0.01$}
        \label{fig:example_2by2_errors:e}    
    \end{subfigure} 
    \begin{subfigure}[b]{0.48\textwidth}    
        \includegraphics[width=\textwidth]{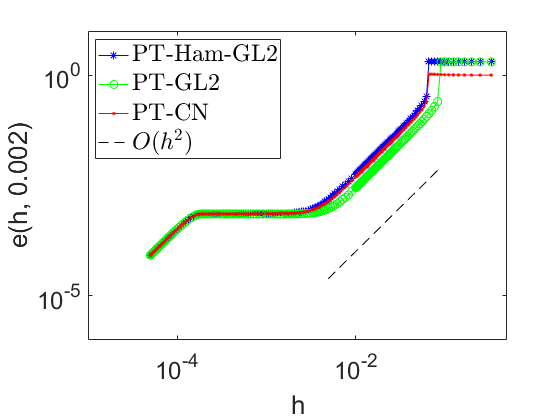}
        \caption{$\epsilon = 0.002$}
        \label{fig:example_2by2_errors:f}    
    \end{subfigure} 
    \caption{Performance of PT-Ham-GL2, PT-GL2 and PT-CN in the 
    near adiabatic regime of the toy example. }
    \label{fig:example_2by2_errors_PT}
\end{figure}

In the end we compare the schemes PT-Ham-GL2, PT-GL2 and PT-CN. 
Although we only justified the behavior of the global error for
PT-Ham-GL2, 
numerical results in Fig.~\ref{fig:example_2by2_errors:e} 
and~\ref{fig:example_2by2_errors:f} 
indicate that there is no essential difference among these methods in
practice.
%

\subsubsection{Beyond adiabatic regime}

\begin{figure} \centering
    \begin{subfigure}[b]{0.48\textwidth}
        \includegraphics[width=\textwidth]{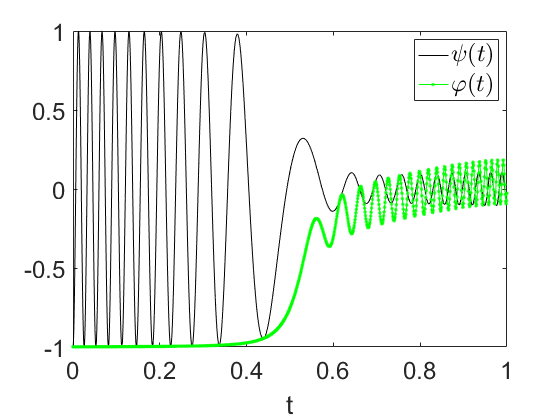}
        \caption{}
        \label{fig:example_2by2_BA_wave:a}
    \end{subfigure} 
    \begin{subfigure}[b]{0.48\textwidth}
        \includegraphics[width=\textwidth]{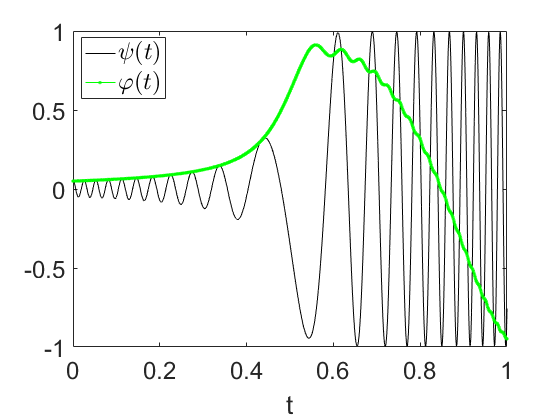}
        \caption{}
        \label{fig:example_2by2_BA_wave:b}
    \end{subfigure}
    \begin{subfigure}[b]{0.48\textwidth}
        \includegraphics[width=\textwidth]{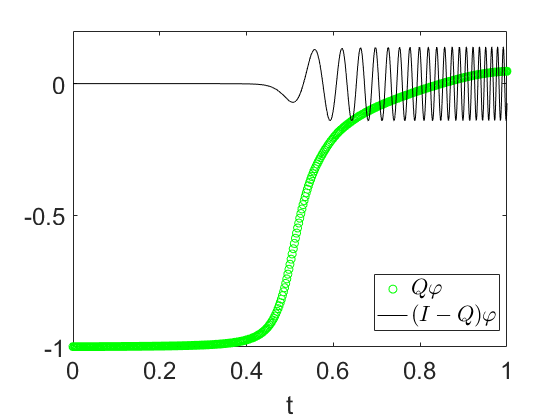}
        \caption{}
        \label{fig:example_2by2_BA_wave:c}
    \end{subfigure}
    \begin{subfigure}[b]{0.48\textwidth}
        \includegraphics[width=\textwidth]{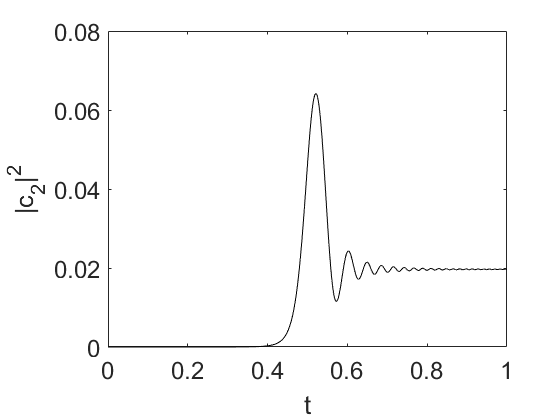}
        \caption{}
        \label{fig:example_2by2_BA_wave:d}
    \end{subfigure}
    \caption{The Schr\"odinger and the PT wave functions 
    beyond the adiabatic regime in the toy example. 
    In all sub-figures, parameters are chosen to be 
    $\epsilon = 0.002,\delta=0.05$, 
    and the solutions are obtained by GL2 with the time step $h = 10^{-6}$. 
    (a)(b) show the first and second entry of the real part of 
    the Schr\"odinger wave function and the PT wave function,
    respectively.
    (c) shows a decomposition of the PT wave function into 
    the two orthogonal eigenspaces 
    (in the sub-figure we only present the real part of the first entry). 
    (d) shows the time evolution of the probability that the eigenstate
    corresponding to $\lambda_2$ is occupied. }
    \label{fig:example_2by2_BA_wave}
\end{figure}

As the value of $\delta$ is reduced, the second eigenstate corresponding
to $\lambda_{2}$ may contribute significantly to the wave function, which
leads to the violation of the adiabatic regime.

Fig.~\ref{fig:example_2by2_BA_wave} investigates the 
Schr\"odinger wave function and the PT wave function with $\epsilon = 0.002, \delta = 0.05$. 
Fig.~\ref{fig:example_2by2_BA_wave:a} and~\ref{fig:example_2by2_BA_wave:b} 
compare the real parts of the Schr\"odinger wave function 
and the PT wave function. 
When $t<t_0=0.5$, the system stays close to the adiabatic regime and the PT
wave function is nearly flat.
However, when $t>t_0$, the PT wave function starts to oscillate as well. 
Fig.~\ref{fig:example_2by2_BA_wave:c} shows an orthogonal decomposition of the 
PT wave function into two orthogonal eigenspaces.
Fig.~\ref{fig:example_2by2_BA_wave:d} shows the evolution of the
probability that the eigenstate corresponding to $\lambda_2(t)$ is occupied, 
which can be computed as $|c_2|^2 = |(\varphi(t), e_2(t))|^2$ 
and $e_2(t)$ is the normalized eigenstate of $H(t)$ corresponding to $\lambda_2(t)$. 
These results confirm that the oscillatory behavior originates from the
excited state corresponding to $\lambda_2$.


\begin{figure} \centering
    \begin{subfigure}[b]{0.48\textwidth}
        \includegraphics[width=\textwidth]{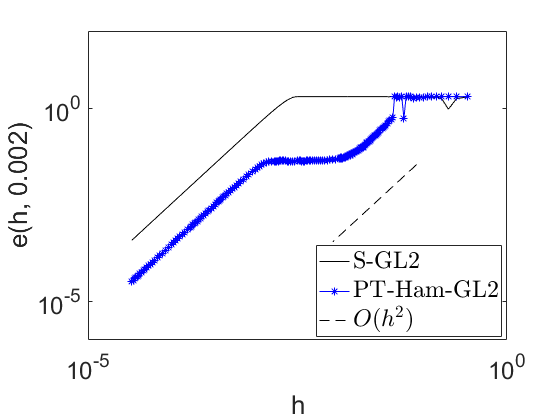}
        \caption{$\delta = 0.07$}
        \label{fig:example_2by2_BA_errors:a}
    \end{subfigure} 
    \begin{subfigure}[b]{0.48\textwidth}
        \includegraphics[width=\textwidth]{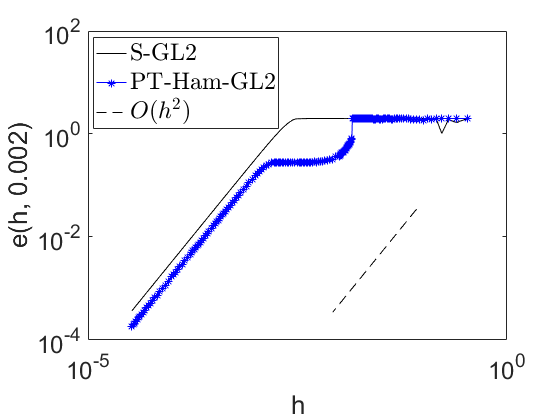}
        \caption{$\delta = 0.05$}
        \label{fig:example_2by2_BA_errors:b}
    \end{subfigure}
    \begin{subfigure}[b]{0.48\textwidth}
        \includegraphics[width=\textwidth]{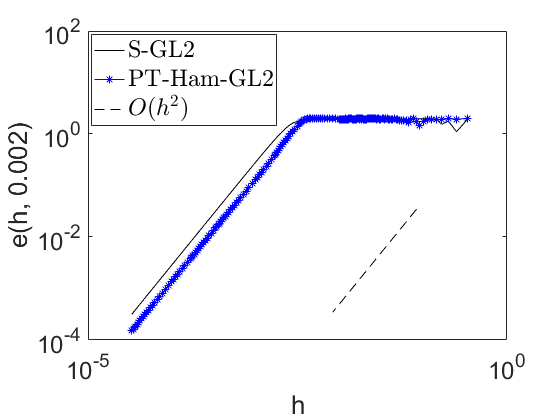}
        \caption{$\delta = 0.03$}
        \label{fig:example_2by2_BA_errors:c}
    \end{subfigure}
    \begin{subfigure}[b]{0.48\textwidth}
        \includegraphics[width=\textwidth]{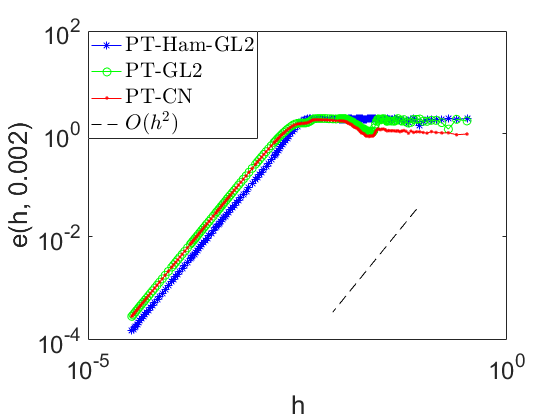}
        \caption{$\delta = 0.03$}
        \label{fig:example_2by2_BA_errors:d}
    \end{subfigure}
    \caption{Numerical errors of different numerical methods 
    beyond the adiabatic regime in the toy example. 
    In all sub-figures $\epsilon = 0.002$. 
    (a)(b)(c) compare the numerical performances 
    between S-GL2 and PT-Ham-GL2 for $\delta = 0.07,0.05,0.03$, respectively. 
    (d) gives a comparison of PT-Ham-GL2, PT-GL2 and PT-CN 
    with $\delta = 0.03$. }
    \label{fig:example_2by2_BA_errors}
\end{figure}


As discussed before, such oscillatory nature in the wave functions 
may increase the computational difficulty and 
require a smaller time step even for the PT dynamics.
Fig.~\eqref{fig:example_2by2_BA_errors} 
compares $\mathsf{e}(h,\epsilon)$  for 
S-GL2, PT-Ham-GL2, PT-GL2 and PT-CN respectively. 
The results confirm that the PT dynamics is always more accurate than
the Schr\"odinger dynamics using the same step size, but the gain
becomes smaller as $\delta$ decreases.

\subsection{Nonlinear Schr\"odinger equation in one dimension}

Next we study the performance of the PT dynamics in a singularly
perturbed nonlinear Schr\"odinger equation in one dimension.
\begin{equation}\label{eqn:exampleNLS}
    \begin{split}
    \I \epsilon \partial_t \psi(x,t)
    &= -\frac{1}{2}\partial^2_x \psi(x,t) + V(x,t)\psi(x,t) + g|\psi(x,t)|^2\psi(x,t), \ \ x \in [0,L] \\
    \psi(x,0) &= \psi_0(x) \\
    \psi(0,t) &= \psi(L,t) {.} 
    \end{split}
\end{equation}
We set $L = 50$, and the external potential is chosen to be a time-dependent
Gaussian function modeling a moving potential well (Fig.~\ref{fig:example_NLS_potential})
\begin{equation}\label{eqn:example_NLS_potential}
    V(x,t) = - \exp(-0.1(x-R(t))^2)
\end{equation}
with a time-dependent center 
\begin{equation}\label{eqn:example_NLS_center}
    R(t) = 25 + 1.5\exp(-25(t-0.1)^2) 
    + \exp(-25(t-0.5)^2) {.}
\end{equation}
Note that $R(t)$ varies on the $\Or(1)$ time scale.

\begin{figure} \centering
    \begin{subfigure}[b]{0.48\textwidth}
        \includegraphics[width=\textwidth]{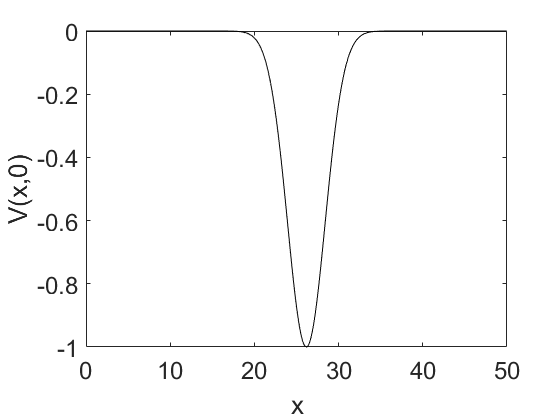}
        \caption{}
        \label{fig:example_NLS_potential:a}
    \end{subfigure} 
    \begin{subfigure}[b]{0.48\textwidth}
        \includegraphics[width=\textwidth]{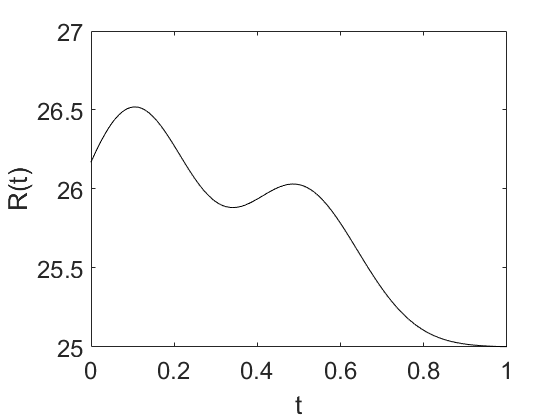}
        \caption{}
        \label{fig:example_NLS_potential:b}
    \end{subfigure}
    \caption{External potential and the time-dependent center for 
    the nonlinear Schr\"odinger equation.}
    \label{fig:example_NLS_potential}
\end{figure}

We use equidistant nodes $x_k = kh_x$ and the second-order finite
difference scheme for spacial discretization, and we fix $h_x = 0.025$. 
Other parameters in this example are chosen to be 
$g = 2.5, T = 1, \epsilon = 0.0025$. 
For the choices of the parameters in the Anderson Mixing, 
the step length $\alpha = 1$, the mixing dimension is 20, 
and the tolerance is $10^{-8}$. 
Fig.~\ref{fig:example_NLS_errors} compares $\mathsf{e}(h,\epsilon)$
of S-GL2, PT-Ham-GL2, PT-GL2 and PT-CN, and confirms the same numerical behavior
as in the toy example. 

%

\begin{figure} \centering
    \begin{subfigure}[b]{0.48\textwidth}
        \includegraphics[width=\textwidth]{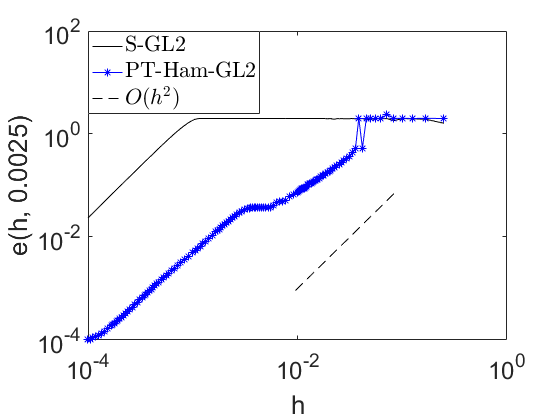}
        \caption{}
        \label{fig:example_NLS_errors:a}
    \end{subfigure} 
    \begin{subfigure}[b]{0.48\textwidth}
        \includegraphics[width=\textwidth]{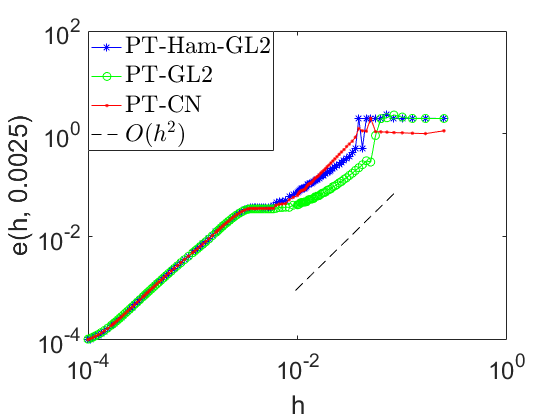}
        \caption{}
        \label{fig:example_NLS_errors:b}
    \end{subfigure}
    \caption{Numerical errors of different numerical methods 
    in the example of the nonlinear Schr\"odinger equation. 
    Parameters are chosen to be $T = 1, \epsilon = 0.0025$. 
    (a) compares S-GL2 and PT-Ham-GL2. 
    (b) compares PT-Ham-GL2, PT-GL2 and PT-CN. }
    \label{fig:example_NLS_errors}
\end{figure}


\begin{figure} \centering
        \includegraphics[width=0.48\textwidth]{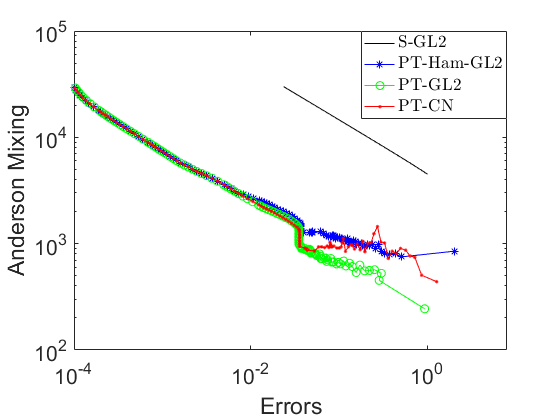}
    \caption{Total numbers of the Anderson mixing versus the numerical error.}
    \label{fig:example_NLS_costs}
\end{figure}

Next we study the computational cost by 
comparing the total number of the Anderson mixing steps versus the
numerical error $\mathsf{e}(h,\epsilon)$ up to $T=1$.
Fig.~\ref{fig:example_NLS_costs} clearly demonstrates that in order to
achieve the same level of accuracy, all the methods propagating the PT
dynamics, including PT-Ham-GL2, PT-GL2 and PT-CN, are much more efficient
than S-GL2. This is valid across the entire range of the step sizes
under study.

\subsection{Time-dependent density functional theory in three dimension}


As the last example, we demonstrate the performance of the PT dynamics
for a benzene molecule driven by an ultrashort laser pulse using the
time-dependent density functional theory (TDDFT).
\REV{The TDDFT equations are
\begin{equation}
  \I \partial_t \Psi(t) = H(t,P)\Psi(t),\quad P(t) = \Psi(t)\Psi^*(t),
  \label{eqn:TDDFT_equations}
\end{equation}
and the corresponding PT-TDDFT equations are
\begin{equation}
  \I \partial_t \Phi(t) = H(t,P)\Phi(t) - \Phi(t)\Phi^*(t)H(t,P)\Phi(t),\quad P(t) = \Phi(t)\Phi^*(t).
  \label{eqn:PT_TDDFT_equations}
\end{equation}
The number of wavefunctions $N$ is 15 for benzene example. Compared to the setup of singularly perturbed equations, here in the sense that the parameter $\epsilon$ is formally set to $1$ in TDDFT equations. However, as will be seen later, the PT dynamics can still result in significant computational advantage. }
The Hamiltonian takes the form
\REV{\begin{equation}
  H(t,P) = -\frac{1}{2}\Delta + V_{\ext}(\vr,t) +
  V_{\mathrm{PP}}(\vr) + V_{\Hxc}[\rho(t)].
  \label{}
\end{equation}}
Here $V_{\mathrm{PP}}$ is the pseudopotential operator due to the
electron-ion interaction, and we use the Optimized Norm-Conserving
Vanderbilt (ONCV) pseudopotential~\cite{Hamann2013} with a kinetic
energy cutoff 
$E_{\mathrm{cut}}=30 \text{ Hartree}$. 
\REV{After spatial discretization, $V_{\mathrm{PP}}$  becomes a matrix independent of the time $t$ and the density matrix $P$.}
$V_{\Hxc}$ is the sum of the Hartree and exchange-correlation
potentials. We use the Perdew-Burke-Ernzerhof
(PBE)~\cite{PerdewBurkeErnzerhof1996} exchange correlation potential 
that depends on the electron density $\rho(t)=\mathrm{diag}[P(t)]$. 
The external potential $V_{\text{ext}}(\vr,t) = \textbf{r} \cdot
\textbf{E}(t)$ is given by a time-dependent electric
field
\begin{equation}
\textbf{E}(t) = \hat{\textbf{k}}E_{\text{max}}\exp \Big[-\frac{(t-t_0)^2}{2a^2}\Big]\sin [\omega(t-t_0)] {,}
\end{equation}
where $\hat{\textbf{k}}$ is a unit vector defining the polarization of the electric field. 
The parameters $a,t_0,E_{\text{max}},\omega$ define
the width, the initial position of the center,
the maximum amplitude of the Gaussian envelope,
and the frequency of the laser, respectively.  In practice $\omega$ and
$a$ are often determined by
the wavelength $\lambda$ and the full width at half maximum (FWHM) pulse width~\cite{RussakoffLiHeVarga2016}, 
\ie $\lambda \omega = 2\pi c$ and $\text{FWHM} = 2a\sqrt{2\log 2}$,
where $c$ is the speed of the light. 
In this example, the peak electric field $E_{\text{max}}$ is 1.0 eV/\r A,
occurring at $t_0 = 15.0$ fs. 
The FWHM pulse width is 6.0 fs,
and the polarization of the laser field is aligned along the $x$ axis 
(the benzene molecule is in $x$-$y$ plane, see 
Fig.~\ref{fig:example_TDDFT_potential:benzene}). 
We consider one relatively slow laser 
with wavelength 800 nm, and another faster
laser with wavelength 250 nm, respectively
(Fig.~\ref{fig:example_TDDFT_potential}). 
The electron dynamics for the first laser is in the near adiabatic
regime, where the system stays near the ground state after the active
time interval of
the laser, while the second laser drives electrons to excited
states.  We implement S-RK4 and PT-CN in the PWDFT package, and
propagate TDDFT to $T = 30.0$ fs.  
For the parameters in the Anderson mixing, the step length $\alpha$ is $0.2$,
the mixing dimension is 10, and the tolerance is $10^{-6}$. 
We measure the accuracy using the dipole moment
$\vD(t):=\Tr[\vr P(t)]$, as well as the energy difference $E(t)-E(0)$
along the trajectory.

\begin{figure}
    \centering
     \begin{subfigure}[b]{0.5\textwidth}
        \includegraphics[width=\textwidth]{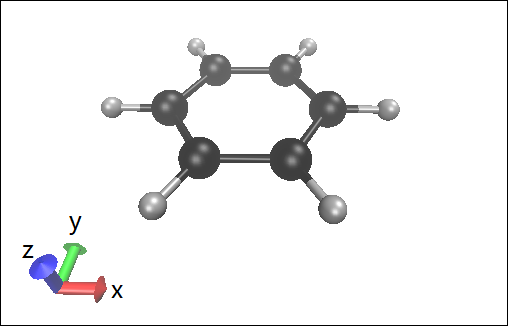}
        \caption{}
        \label{fig:example_TDDFT_potential:benzene}
    \end{subfigure} 

    \centering
       \begin{subfigure}[b]{0.48\textwidth}
       \centering
        \includegraphics[width=\textwidth]{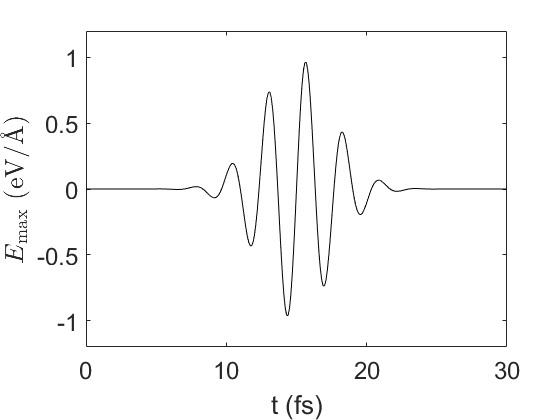}
        \caption{}
        \label{fig:example_TDDFT_potential:a}
    \end{subfigure} 
    \begin{subfigure}[b]{0.48\textwidth}
        \includegraphics[width=\textwidth]{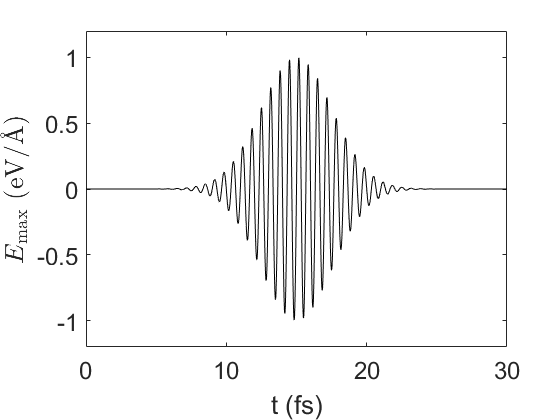}
        \caption{}
        \label{fig:example_TDDFT_potential:b}
    \end{subfigure}
  \caption{
  (a) The benzene molecule. The direction of the external electric field is 
  along the x-axis. 
  This figure is generated by VMD package~\cite{HumphreyDalkeSchulten1996}. 
  (b)(c) The intensity of the electric field. 
  The peak electric field $E_{\text{max}}$ is 1.0 eV/\r A,
occurring at $t_0 = 15.0$ fs, and the FWHM pulse width is 6.0 fs. 
The wavelength is 800 nm in (b), and 250 nm in (c). 
  }\label{fig:example_TDDFT_potential}
\end{figure}


\begin{figure}
    \centering
    \begin{subfigure}[b]{0.48\textwidth}
        \includegraphics[width=\textwidth]{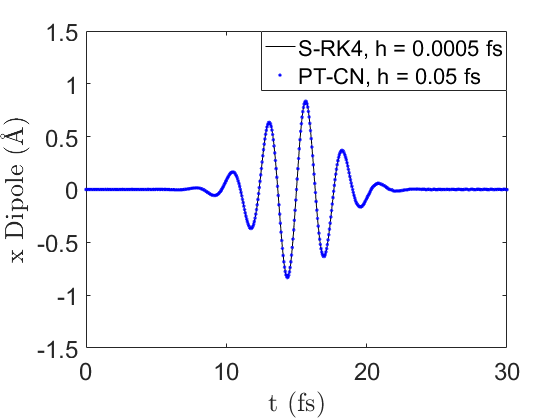}
        \caption{}
        \label{fig:example_TDDFT_results:a}
    \end{subfigure} 
    \begin{subfigure}[b]{0.48\textwidth}
        \includegraphics[width=\textwidth]{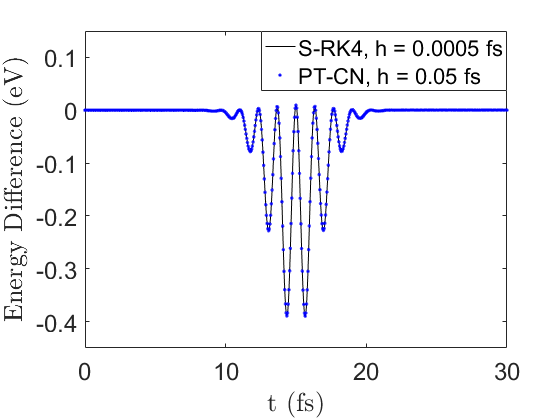}
        \caption{}
        \label{fig:example_TDDFT_results:b}
    \end{subfigure}
  \caption{(a) Dipole moment along the x-direction and (b) total energy
  difference with the 800 nm laser.}
 \label{fig:example_TDDFT_results}
\end{figure}

\begin{figure}
    \centering
    \begin{subfigure}[b]{0.48\textwidth}
        \includegraphics[width=\textwidth]{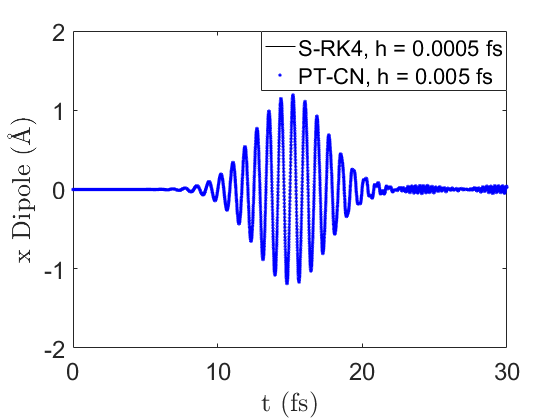}
        \caption{}
        \label{fig:example_TDDFT_results2:a}
    \end{subfigure} 
    \begin{subfigure}[b]{0.48\textwidth}
        \includegraphics[width=\textwidth]{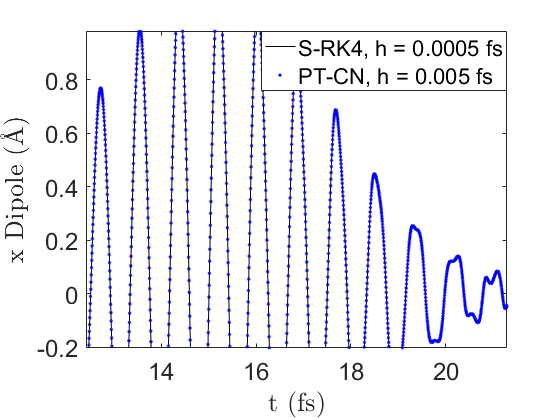}
        \caption{}
        \label{fig:example_TDDFT_results2:b}
    \end{subfigure}
    \begin{subfigure}[b]{0.48\textwidth}
        \includegraphics[width=\textwidth]{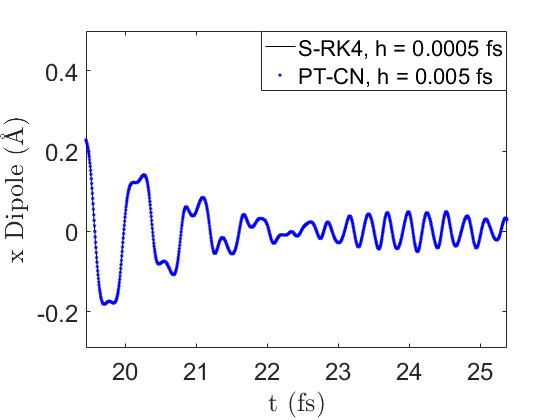}
        \caption{}
        \label{fig:example_TDDFT_results2:c}
    \end{subfigure}
    \begin{subfigure}[b]{0.48\textwidth}
        \includegraphics[width=\textwidth]{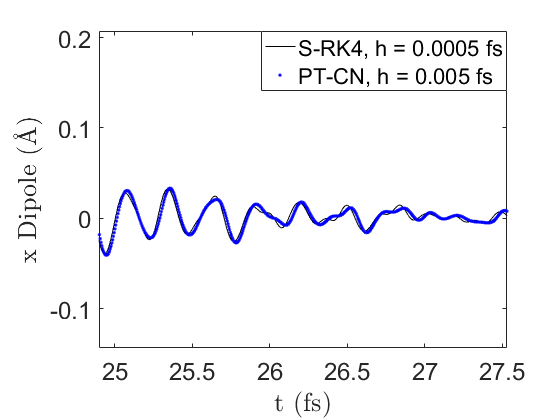}
        \caption{}
        \label{fig:example_TDDFT_results2:d}
    \end{subfigure}
    \begin{subfigure}[b]{0.48\textwidth}
        \includegraphics[width=\textwidth]{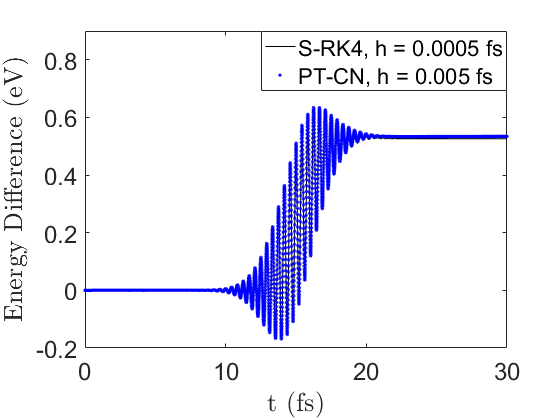}
        \caption{}
        \label{fig:example_TDDFT_results2:e}
    \end{subfigure}
    \begin{subfigure}[b]{0.48\textwidth}
        \includegraphics[width=\textwidth]{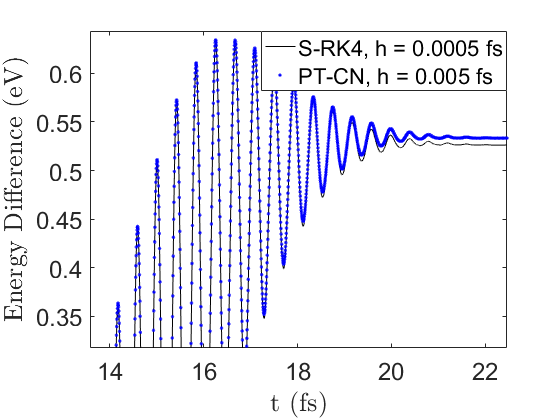}
        \caption{}
        \label{fig:example_TDDFT_results2:f}
    \end{subfigure}
  \caption{(a) Dipole moment along the x-direction and (e) total energy
  difference with the 250 nm laser, with zoom-in views provided in (b)(c)(d)(f).}
%
  \label{fig:example_TDDFT_results2}
\end{figure}

Figure~\ref{fig:example_TDDFT_results} shows the numerical results 
for the 800 nm laser using S-RK4 with a step size 0.0005 fs 
and PT-CN with a step size 0.05 fs. 
In this case, the system stays near the ground state after 
the active time interval of the laser. 
After 25.0 fs, the total energy for S-RK4 only increases 
by $2.00\times 10^{-4}$ eV, and hence we may use the results from S-RK4
as our benchmark.
We remark that S-RK4 becomes unstable at large time step sizes. 
Even when increasing the time step to be 0.001 fs, S-RK4 blows up 
within 100 time steps.
We observe that PT-CN agrees perfectly with S-RK4 
in terms of the dipole moment along the $x$ direction, and the
total energy difference.
After 25.0 fs, the total energy is nearly constant 
and only slightly increases by $2.44\times 10^{-4}$ eV compared to that
of the initial state.

Since the computational cost of TDDFT calculations is mainly dominated
by the cost of applying the Hamiltonian matrix to wave functions, 
we measure the numerical efficiency using the number of such
matrix-vector multiplications.
Although PT-CN requires more matrix-vector multiplications 
in each time step, 
the total number of matrix-vector multiplications is
still significantly reduced due to the larger time step size,
and PT-CN usually achieves a significant speedup.
More specifically, in this case, during the time interval for which the
laser is active (from 5.5 fs to 24.5 fs), 
the average number of matrix-vector multiplications 
in each PT-CN time step is 12.6, 
and the total number of matrix-vector multiplications in the simulation is 4798. 
On the other hand,
the number of matrix-vector multiplications in each S-RK4 time step is 4, 
and the total number of matrix-vector multiplications 
during this period using time step 0.0005 fs 
is 152000. 
Hence the overall speedup of PT-CN over RK4 is $31.7$.

Figure~\ref{fig:example_TDDFT_results2} shows the numerical results for
the 250 nm laser.  In this case, the laser carries more energy and hence
a significant amount of electrons can reach the excited states.
According to the S-RK4 benchmark, 
the total energy of the system increases by 0.5260 eV after 25.0 fs. 
Furthermore, the dipole moment along the $x$ direction oscillates more
strongly due to the excitation. PT-CN needs to adopt a smaller time 
step size 0.005 fs, and still gives a very good approximation to the
electron dynamics compared to S-RK4, 
For the dipole moment, 
PT-CN results match very well with S-RK4 benchmark during
(Fig.~\ref{fig:example_TDDFT_results2:b}) and after
(Fig.~\ref{fig:example_TDDFT_results2:c}
and~\ref{fig:example_TDDFT_results2:d}) the active time interval of the laser.
The total energy obtained by PT-CN matches very well with 
that in S-RK4 benchmark during the active interval
and stays at a constant level with an average increase of 0.5340 eV 
by the end of the simulation (Fig.~\ref{fig:example_TDDFT_results2:e}
and~\ref{fig:example_TDDFT_results2:f}). 
In this case, PT-CN slightly overestimates the total energy 
after the laser's action by $7.96\times 10^{-3}$ eV. 

For the computational costs within the period from 5.5 fs to 24.5 fs, 
the total number of 
matrix-vector multiplications is still 152000 for S-RK4. 
The average number of matrix-vector multiplications 
in each PT-CN time step is 7.5 due to the reduced step size, 
and the total number of matrix-vector multiplications is 28610. 
Therefore in this case PT-CN achieves 5.3 times speedup over S-RK4. 

\begin{table}
  \centering
  \begin{tabular}{cc|cc|cc}
  \hline Method & h (fs) & AEI (eV) & AOE (eV) & MVM & Speedup\\\hline
  S-RK4 & 0.0005 & 0.5260  & /       & 152000 & /    \\
  PT-CN & 0.005  & 0.5340  & 0.0080  & 28610  & 5.3  \\
  PT-CN & 0.0065 & 0.5347  & 0.0087  & 22649  & 6.7  \\
  PT-CN & 0.0075 & 0.5362  & 0.0102  & 21943  & 6.9  \\
  PT-CN & 0.01   & 0.5435  & 0.0175  & 15817  & 9.6  \\
  PT-CN & 0.02   & 0.5932  & 0.0672  & 12110  & 12.6 \\\hline
  \end{tabular}
  \caption{Accuracy and efficiency of PT-CN for the electron dynamics
  with the 250 nm laser compared to S-RK4.
  The accuracy is measured using 
  the average energy increase (AEI) after 25.0 fs and
  the average overestimated energy (AOE) after 25.0 fs.
  The efficiency is measured using 
  the total number of matrix-vector multiplications (MVM) 
  during the time interval from 5.5 fs to 24.5 fs, 
  and the computational speedup.
  }\label{tab:Example_TDDFT_EnergyDifference}
\end{table}


We remark that even the electron dynamics is beyond the adiabatic
regime, PT-CN can still be stable with a larger time step.
Table~\ref{tab:Example_TDDFT_EnergyDifference} measures the accuracy of
PT-CN with $h=$ 0.005 fs, 0.0065 fs, 0.0075 fs, 
0.01 fs and 0.02 fs, respectively. 
We find
that the number of matrix-vector multiplications systematically reduces
as the step size increases. When the step size is 0.02 fs, the speed up
over S-RK4 is $12.6$, and this is at the expense of overestimating
the energy by $0.0672$ eV after the active interval of the laser.  
Hence one can use PT-CN to quickly study the electron dynamics with a
large time step, while this is not possible using an explicit scheme
like S-RK4.




\section{Conclusion}\label{sec:conclusion}

Quantum dynamics can be equivalently written in terms of the
Schr\"odinger equation for the wave function, and the von Neumann
equation for the density matrix. However, the Schr\"odinger dynamics may
require a very small time step in numerical simulation due to the
non-optimal gauge choice. In this paper, we propose to close this gap by
identifying the optimal gauge choice, which is obtained from the
parallel transport formulation.  The solution of the resulting parallel
transport (PT) dynamics can be significantly less oscillatory to that of the
Schr\"odinger dynamics, especially in the near adiabatic regime.  The PT
dynamics is suitable to be combined with implicit time integrators,
which allows the usage of large time steps even when the spectral radius
of the Hamiltonian is large, and/or when $\epsilon$ is small. Although
our global error analysis only applies to the Hamiltonian form of the PT
dynamics with symplectic integrators and 
a relatively small time step, our numerical results
indicate that the PT dynamics can be effectively discretized with more
general numerical schemes and with much larger time steps. The
mathematical understanding of the behavior with a large time step is our
future work. 
Combining the PT dynamics with numerical schemes other than the
Runge-Kutta methods and the linear multistep methods, as well as  more
detailed numerical studies of the PT dynamics for the time-dependent
density functional theory calculations are also under progress.

\section*{Acknowledgments} 

This work was partially supported by the National Science Foundation
under Grant No. 1450372, No. DMS-1652330  (D. A. and L. L.), and by the Department of
Energy under Grant No. DE-SC0017867, No. DE-AC02-05CH11231 (L. L.). 
We thank the National Energy Research Scientific Computing (NERSC)
center and the Berkeley Research Computing (BRC) program at the
University of California, Berkeley for making computational resources
available.  We thank Stefano Baroni, Roberto Car, Weile Jia, Christian
Lubich, and Lin-Wang Wang for helpful discussions.

\bibliographystyle{siam}
\bibliography{ptref}

\end{document}